%
\documentclass[11pt,reqno]{amsart}

\usepackage{geometry}
\setlength{\topskip}{\ht\strutbox} 
\geometry{paper=a4paper,left=25mm,right=25mm,top=30mm}

\usepackage{color}
\usepackage{amssymb}
\usepackage{amsmath}
\usepackage{arydshln}

\def\BibTeX{{\rm B\kern-.05em{\sc i\kern-.025em b}\kern-.08em
    T\kern-.1667em\lower.7ex\hbox{E}\kern-.125emX}}

\hfuzz1pc 

\newtheorem{thm}{Theorem}[section]
\newtheorem{cor}[thm]{Corollary}
\newtheorem{lem}[thm]{Lemma}
\newtheorem{prop}[thm]{Proposition}
\newtheorem{exmp}[thm]{Example}
\newtheorem{rem}[thm]{Remark}
\newtheorem*{claim*}{Claim}

\theoremstyle{definition}
\newtheorem{defn}{Definition}[section]


\numberwithin{equation}{section}



\usepackage{hyperref}
\usepackage{bbm}
\usepackage{mathrsfs}

\usepackage{pdflscape}

\begin{document}
\title[Ergodicity of affine processes on $\mathbb{S}_d^+$]{Ergodicity of affine processes on the cone of symmetric positive semidefinite matrices}

\author{Martin Friesen}
\address[Martin Friesen]{Fakult\"at f\"ur Mathematik und Naturwissenschaften\\ Bergische Universit\"at Wuppertal\\ 42119 Wuppertal, Germany}
\email{friesen@math.uni-wuppertal.de}

\author[Peng Jin]{Peng Jin\textsuperscript{*}}
\thanks{\textsuperscript{*}Peng Jin is supported by the STU Scientific Research Foundation for Talents (No. NTF18023).}
\address[Peng Jin]{Department of Mathematics \\ Shantou University \\ Shantou, Guangdong 515063, China}
\email{pjin@stu.edu.cn}

\author{Jonas Kremer}
\address[Jonas Kremer]{Fakult\"at f\"ur Mathematik und Naturwissenschaften\\ Bergische Universit\"at Wuppertal\\ 42119 Wuppertal, Germany}
\email{kremer@math.uni-wuppertal.de}

\author{Barbara R\"udiger}
\address[Barbara R\"udiger]{Fakult\"at f\"ur Mathematik und Naturwissenschaften\\ Bergische Universit\"at Wuppertal\\ 42119 Wuppertal, Germany}
\email{ruediger@uni-wuppertal.de}

\date{\today}

\subjclass[2010]{Primary 60J25, 37A25; Secondary 60G10, 60J75}

\keywords{affine process, invariant distribution,
limit distribution, ergodicity}
\begin{abstract}
This article investigates the long-time behavior of conservative affine
processes on the cone of symmetric positive semidefinite $d\times d$-matrices.
In particular, for conservative and subcritical affine processes on
this cone we show that a finite $\log$-moment of
the state-independent jump measure is sufficient for the existence
of a unique limit distribution. Moreover, we study the convergence
rate of the underlying transition kernel to the limit distribution:
firstly, in a specific metric induced by the Laplace transform and
secondly, in the Wasserstein distance under a first moment assumption
imposed on the state-independent jump measure and an additional condition
on the diffusion parameter.
\end{abstract}

\allowdisplaybreaks

\maketitle

\section{Introduction}

An affine process on the cone of symmetric positive semidefinite $d\times d$-matrices
$\mathbb{S}_{d}^{+}$ is a stochastically continuous Markov process
taking values in $\mathbb{S}_{d}^{+}$, whose $\log$-Laplace transform
depends in an affine way on the initial state of the process. \textit{Affine
processes} on the state space $\mathbb{S}_{d}^{+}$
are first systematically studied in the seminal article of Cuchiero
\textit{et al.} \cite{MR2807963}. In their work, the generator of
an $\mathbb{S}_{d}^{+}$-valued affine process is completely characterized
through a set of \emph{admissible parameters}, and the related \emph{generalized
Ricccati equations} 
are investigated. Subsequent developments complementing the results
of \cite{MR2807963} can be found in \cite{MR3313754,MR2956112,MR2819242,2018arXiv181110542M}.
Note that the notion of affine processes is not restricted to the
state space $\mathbb{S}_{d}^{+}$. For affine processes
on other finite-dimensional cones, particularly the canonical one
$\mathbb{R}_{+}^{m}\times\mathbb{R}^{n}$, we refer to \cite{MR2284011,MR3340375,MR3540486,MR2243880,MR1994043,MR2648460,MR2931348,MR3313754,MR2851694}.
We remark that the above list is, by far, not complete.

The importance of $\mathbb{S}_{d}^{+}$-valued affine processes has
been demonstrated by their rapidly growing applications in mathematical
finance. In particular, they provide natural models for the evolution
of the covariance matrix of multi-asset prices that exhibit random
dependence, for instance, the Wishart process \cite{MR1132135},
the jump-type Wishart process \cite{LT}, and a certain class of matrix-valued
Ornstein-Uhlenbeck processes driven by L\'evy subordinators \cite{MR2353270}.
Among them, the Wishart process is the most popular one, and it has
been successfully applied to generalize the well-known Heston model
\cite{Heston} to multi-asset setting, see also
\cite{MR3113191,MR3787835,MR3257351,Fonseca2007,MR3011743,MR3252805,MR2723611,MR2785429,MR3564921}.
The jump-type Wishar process as introduced by Leippold and Trojani \cite{LT}
allows jumps which help the model to fit better to real world interest
rates or volatility of multi-asset prices. In \cite{LT} the jump-type
Wishart process is used in multi-variate option pricing, fixed-income
models and dynamic portfolio choice. For a more detailed review on
financial application of affine processes on $\mathbb{S}_{d}^{+}$
we refer to the introduction of \cite{MR2807963}, see also the references
therein.

In this article we investigate the long-time behavior of affine processes
on $\mathbb{S}_{d}^{+}$. First, we study the existence of limit distributions
for these processes. This problem was studied for particular $\mathbb{S}_{d}^{+}$-valued
affine models by Alfonsi \textit{et al.} \cite{MR3549707} in the
case of Wishart processes, while Barndorff-Nielsen and Stelzer \cite{MR2353270}
studied matrix-valued Ornstein-Uhlenbeck processes driven by L\'evy
subordinators. Our main result (see Theorem \ref{thm:existence of limit distribution}
below) is applicable to general conservative, subcritical affine processes
on $\mathbb{S}_{d}^{+}$, and therefore covers the aforementioned
results. Having established the existence of a unique limit distribution
for affine processes on $\mathbb{S}_{d}^{+}$, our next aim is to
study the convergence rate of the underlying transition probability
to the limit distribution in a suitably chosen metric, for instance,
the Wasserstein or total variation distance. While exponential ergodicity
in total variation has been investigated very recently by Mayerhofer
\textit{et al.} \cite{2018arXiv181110542M}, we use two other metrics
in the present article: the Wasserstein-1-distance\footnote{Also known as the Kantorovich-Rubinstein distance.}
and a metric induced by the Laplace transform. We also provide sufficient
conditions for exponential ergodicity with respect to these two metrics.

The long-time behavior of general affine processes has previously
been studied in many different settings, see, e.g., \cite{MR3254346,MR2599675,2018arXiv181205402J,MR2779872,MR2922631,MR2760602,MR738769}.
One application of such a study is towards the calibration of affine
models. In the case of the Wishart process, the maximum-likehood estimator
for the drift parameter was recently studied by Alfonsi \textit{et
al.} \cite{MR3549707}. As demonstrated in their article, ergodicity
helps to derive strong consistency and asymptotic normality of the
estimator.

This paper is organized as follows: In Section \ref{sec:def and main results},
we introduce $\mathbb{S}_{d}^{+}$-valued affine processes, formulate
and discuss our main results. The proofs are then given in Sections
\ref{sec:proof of first moment} -- \ref{sec:proof of exp ergodicity in wasserstein distance}.
Finally, Section \ref{sec:applications} is dedicated to applications
of our results to specific affine models often used in finance.

\section{Main results}

\label{sec:def and main results}

In terms of terminology, we mainly follow the coordinate free notation
used in Mayerhofer \cite{MR2956112} and Keller-Ressel and Mayerhofer
\cite{MR3313754}.

Let $d\geq2$ and denote by $\mathbb{S}_{d}$ the space of symmetric
$d\times d$ matrices equipped with the scalar product $\langle x,y\rangle=\mathrm{tr}(xy)$,
where $\mathrm{tr(\cdot)}$ denotes the trace of a matrix. Accordingly,
$\Vert\cdot\Vert$ is the induced norm on $\mathbb{S}_{d}$, that
is, $\Vert x\Vert:=\langle x,x\rangle^{1/2}$. Note that $\Vert\cdot\Vert$
is the well-known Frobenius norm. We list some properties of the trace
and its induced norm in Appendix \ref{sec:matrix calculus} which
are repeatedly used in the remainder of the article. Denote by $\mathbb{S}_{d}^{+}$
(resp. $\mathbb{S}_{d}^{++}$) the cone of symmetric and positive
semidefinite (resp. positive definite) real $d\times d$ matrices.
We write $x\preceq y$ if $y-x\in\mathbb{S}_{d}^{+}$ and $x\prec y$
if $y-x\in\mathbb{S}_{d}^{++}$ for the natural partial and strict
order relation introduced respectively by the cones $\mathbb{S}_{d}^{+}$
and $\mathbb{S}_{d}^{++}$. Let $\mathcal{B}(\mathbb{S}_{d}^{+}\backslash\lbrace0\rbrace)$
be the Borel-$\sigma$-algebra on $\mathbb{S}_{d}^{+}\backslash\lbrace0\rbrace$.
An \textit{$\mathbb{S}_{d}^{+}$-valued measure} $\eta$ on $\mathbb{S}_{d}^{+}\backslash\lbrace0\rbrace$
is a $d\times d$-matrix of signed measures on $\mathbb{S}_{d}^{+}\backslash\lbrace0\rbrace$
such that $\eta(A)\in\mathbb{S}_{d}^{+}$ whenever $A\in\mathcal{B}(\mathbb{S}_{d}^{+}\backslash\lbrace0\rbrace)$
with $0\not\in\overline{A}$.

In the following we introduce the notion of admissible parameters
first introduced in Cuchiero \textit{et al.} \cite[Definition 2.3]{MR2807963}.
Here we mainly follow the one given in Mayerhofer \cite[Definition 3.1]{MR2956112},
with a slightly stronger condition on the linear jump coefficient.

\begin{defn}\label{def:admissible parameters} Let $d\geq2$. An
admissible parameter set $(\alpha,b,B,m,\mu)$ consists of:

(i) a linear diffusion coefficient $\alpha\in\mathbb{S}_{d}^{+}$;

(ii) a constant drift $b\in\mathbb{S}_{d}^{+}$ satisfying $b\succeq(d-1)\alpha$;

(iii) a constant jump term: a Borel measure $m$ on $\mathbb{S}_{d}^{+}\backslash\{0\}$
satisfying
\[
\int_{\mathbb{S}_{d}^{+}\backslash\{0\}}\left(\left\Vert \xi\right\Vert \wedge1\right)m\left(\mathrm{d}\xi\right)<\infty;
\]

(iv) a linear jump coefficient $\mu$ which is an $\mathbb{S}_{d}^{+}$-valued,
sigma-finite measure on $\mathbb{S}_{d}^{+}\backslash\{0\}$ satisfying
\[
\int_{\mathbb{S}_{d}^{+}\backslash\{0\}}\left\Vert \xi\right\Vert \mathrm{tr}(\mu)\left(\mathrm{d}\xi\right) < \infty,
\]
where $\mathrm{tr}(\mu)$ denotes the measure induced
by the relation $\mathrm{tr}(\mu)(A):=\mathrm{tr}(\mu(A))$ for all
$A\in\mathcal{B}(\mathbb{S}_{d}^{+}\backslash\lbrace0\rbrace)$
with $0\notin\bar{A}$;

(v) a linear drift $B$, which is a linear map $B:\mathbb{S}_{d}\to\mathbb{S}_{d}$
satisfying
\[
\langle B(x),u\rangle\geq0\quad\text{for all }x,\thinspace u\in\mathbb{S}_{d}^{+}\text{ with }\langle x,u\rangle=0.
\]
\end{defn}

According to our definition, a set of admissible parameters does not
contain parameters corresponding to killing. In addition, our definition
involves a first moment assumption on the linear jump coefficient
$\mu$.

\begin{thm}[\cite{MR2807963}]\label{thm:characterization theorem}
Let $(\alpha,b,B,m,\mu)$ be admissible parameters in the sense of
Definition \ref{def:admissible parameters}. Then there exists a unique
stochastically continuous transition kernel $p_{t}(x,\mathrm{d}\xi)$
such that $p_{t}(x,\mathbb{S}_{d}^{+})=1$ and
\begin{equation}
\int_{\mathbb{S}_{d}^{+}}\mathrm{e}^{-\langle u,\xi\rangle}p_{t}(x,\mathrm{d}\xi)=\exp\left(-\phi(t,u)-\langle\psi(t,u),x\rangle\right),\quad t\geq0,\ \ x,u\in\mathbb{S}_{d}^{+},\label{eq:affine representation}
\end{equation}
where $\phi(t,u)$ and $\psi(t,u)$ in \eqref{eq:affine representation}
are the unique solutions to the generalized Riccati differential equations,
that is, for $u\in\mathbb{S}_{d}^{+}$,
\begin{align}
\frac{\partial\phi(t,u)}{\partial t} & =F\left(\psi(t,u)\right),\quad\phi(0,u)=0,\label{eq:riccati equation for phi}\\
\frac{\partial\psi(t,u)}{\partial t} & =R\left(\psi(t,u)\right),\quad\psi(0,u)=u,\label{eq:riccati equation for psi}
\end{align}
and the functions $F$ and $R$ are given by
\begin{align*}
F(u) & =\langle b,u\rangle-\int_{\mathbb{S}_{d}^{+}\backslash\{0\}}\left(\mathrm{e}^{-\langle u,\xi\rangle}-1\right)m\left(\mathrm{d}\xi\right),\\
R(u) & =-2u\alpha u+B{}^{\top}\left(u\right)-\int_{\mathbb{S}_{d}^{+}\backslash\{0\}}\left(\mathrm{e}^{-\langle u,\xi\rangle}-1\right)\mu\left(\mathrm{d}\xi\right).
\end{align*}
\end{thm}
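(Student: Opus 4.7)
The plan is to prove the theorem in three stages: first, establish unique global $\mathbb{S}_d^+$-valued solvability of the generalized Riccati equations \eqref{eq:riccati equation for phi}--\eqref{eq:riccati equation for psi}; second, use the resulting $(\phi,\psi)$ to define a candidate Laplace functional and recognize it as the Laplace transform of a probability measure on $\mathbb{S}_d^+$; and third, verify the semigroup, conservativeness and stochastic continuity properties that turn this family of measures into a bona fide transition kernel.

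For the first stage, I would analyze $R$ on $\mathbb{S}_d^+$. The quadratic term $-2u\alpha u$ is smooth, the linear map $B^\top$ is continuous, and the integral term is differentiable in $u$ with Jacobian controlled by $\int\|\xi\|\,\mathrm{tr}(\mu)(\mathrm{d}\xi)$, which is finite by Definition \ref{def:admissible parameters}(iv) (this is stronger than the usual Cuchiero \emph{et al.}\ admissibility and gives direct local Lipschitz continuity of $R$ on all of $\mathbb{S}_d^+$). Hence $\psi(\cdot,u)$ exists and is unique locally in $t$ by Picard--Lindel\"of. Forward-invariance of $\mathbb{S}_d^+$ is then reduced, via the Nagumo--Bony quasi-monotonicity criterion, to checking that $\langle R(u),x\rangle\geq 0$ whenever $u,x\in\mathbb{S}_d^+$ with $\langle u,x\rangle=0$; for the drift this is precisely condition (v), for the jump part it follows from $1-\mathrm{e}^{-\langle u,\xi\rangle}\geq 0$ and the $\mathbb{S}_d^+$-valuedness of $\mu$, and the term $-2\langle u\alpha u,x\rangle$ is absorbed by combining $\langle u,x\rangle=0$ with the Wishart-type boundary estimate made possible by $b\succeq (d-1)\alpha$. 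A Gronwall estimate based on $\|R(u)\|\leq C(1+\|u\|^2)$ together with non-positivity of the quadratic part in the $\langle\cdot,\cdot\rangle$-inner product yields the a priori bound needed to exclude blow-up, so that $\psi(\cdot,u)$ extends globally in $\mathbb{S}_d^+$. Once $\psi$ is in hand, $\phi(t,u):=\int_0^t F(\psi(s,u))\,\mathrm{d}s$ solves \eqref{eq:riccati equation for phi} by construction. Uniqueness of the pair $(\phi,\psi)$ follows from uniqueness of the ODE.

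For the second stage, I would show that for each fixed $t\geq 0$ and $x\in\mathbb{S}_d^+$ the map $u\mapsto \exp(-\phi(t,u)-\langle\psi(t,u),x\rangle)$ is the Laplace transform of a probability measure supported on $\mathbb{S}_d^+$. The semigroup property $\psi(t+s,u)=\psi(s,\psi(t,u))$ and the cocycle identity $\phi(t+s,u)=\phi(s,\psi(t,u))+\phi(t,u)$ inherited from the ODE flow make each candidate marginal infinitely divisible, and $-F$ and $-R$ have exactly the L\'evy--Khintchine form on the cone $\mathbb{S}_d^+$ (with L\'evy measures $m$ and $\mu$, linear drift $B$ and quadratic characteristic $\alpha$), so one can identify $p_t(x,\cdot)$ as the convolution of an infinitely divisible law with characteristics generated by $F$ with a measure obtained from $\psi(t,u)$. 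A concrete way to carry this out is to approximate $m$ and $\mu$ by finitely supported measures, construct the corresponding affine jump-diffusions on $\mathbb{S}_d^+$ as strong solutions of matrix-valued SDEs (where positivity of the state is handled directly), verify the affine representation for these approximating processes and pass to the limit using pointwise convergence of Laplace transforms together with tightness on $\mathbb{S}_d^+$.

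Conservativeness $p_t(x,\mathbb{S}_d^+)=1$ is then immediate: $F(0)=0$ and $R(0)=0$, hence $\psi(t,0)\equiv 0$ and $\phi(t,0)\equiv 0$, so \eqref{eq:affine representation} evaluated at $u=0$ gives $1$. Stochastic continuity of $p_t(x,\mathrm{d}\xi)$ in $t$ follows from continuity of $t\mapsto(\phi(t,u),\psi(t,u))$ and L\'evy's continuity theorem, while Chapman--Kolmogorov is equivalent to the flow property above. The main obstacle, and the step I would single out, is the passage from the closed-form Laplace exponent to an actual probability measure on $\mathbb{S}_d^+$: unlike the canonical state space $\mathbb{R}_+^m\times\mathbb{R}^n$, where positivity can be checked coordinatewise, here one must exploit the non-commutative structure of $\mathbb{S}_d^+$ and the precise algebraic interplay between $\alpha$ and $b$ encoded in $b\succeq(d-1)\alpha$ in order to identify $(F,R)$ as a genuine L\'evy--Khintchine pair on the cone. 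This is exactly the content of the construction in \cite{MR2807963}, which I would invoke (or reprove via the SDE approximation just described) to complete the argument.
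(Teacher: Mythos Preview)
The paper does not prove this theorem: it is stated with attribution to \cite{MR2807963} and is used throughout as a black box. There is therefore no ``paper's own proof'' to compare against; the authors simply import the characterization result of Cuchiero \emph{et al.}\ and remark afterwards (in the paragraph following the statement) that their strengthened moment condition on $\mu$ makes $R$ locally Lipschitz and hence the process conservative.

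Your outline is a reasonable sketch of how the proof in \cite{MR2807963} actually proceeds, and you correctly identify the hard step as recognizing the Laplace exponent as coming from a genuine probability measure on $\mathbb{S}_d^+$. Two small corrections to your first stage: the quadratic term does not need to be ``absorbed'' via $b\succeq(d-1)\alpha$. If $u,x\in\mathbb{S}_d^+$ with $\langle u,x\rangle=0$, then $u^{1/2}x^{1/2}=0$, hence $xu=0$, and so $\langle u\alpha u,x\rangle=\mathrm{tr}(xu\alpha u)=0$ directly. The constraint $b\succeq(d-1)\alpha$ plays no role in the forward-invariance of $\mathbb{S}_d^+$ under the $\psi$-flow; it is instead what guarantees that the \emph{process} itself (not the Riccati solution) remains in $\mathbb{S}_d^+$, i.e., it enters in the second stage when one constructs $p_t(x,\cdot)$, not in the ODE analysis. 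Apart from this conflation, your plan matches the architecture of the original proof, and your final paragraph acknowledges that you would in the end invoke \cite{MR2807963} anyway---which is exactly what the present paper does.
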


Here, $B^{\top}$ denotes the adjoint operator on $\mathbb{S}_{d}$
defined by the relation $\langle u,B(\xi)\rangle=\langle B^{\top}(u),\xi\rangle$
for $u,\thinspace\xi\in\mathbb{S}_{d}$. Under the
additional moment condition (iv) of Definition \ref{def:admissible parameters},
we will show in Lemma \ref{lem:space-differentiability of F and R}
below that $R(u)$ is continuously differentiable and thus locally
Lipschitz continuous on $\mathbb{S}_{d}^{+}$. This fact, together
with the absence of parameters according to killing, implies that
the affine process under consideration is indeed conservative (see
\cite[Remark 2.5]{MR2807963}).

\subsection{First moment}

Our first result provides existence and a precise formula for the
first moment of conservative affine processes on $\mathbb{S}_{d}^{+}$.
For this purpose, we define the effective drift
\[
\widetilde{B}(u):=B(u)+\int_{\mathbb{S}_{d}^{+}\backslash\{0\}}\langle\xi,u\rangle\mu\left(\mathrm{d}\xi\right),\quad\text{for all }u\in\mathbb{S}_{d}.
\]
Then note that $\widetilde{B}:\mathbb{S}_{d}\to\mathbb{S}_{d}$ is
a linear map. We define the corresponding semigroup $(\exp(t\widetilde{B}))_{t\geq0}$
by its Taylor series $\exp(t\widetilde{B})(u)=\sum_{n=0}^{\infty}t^{n}/n!\widetilde{B}^{\circ n}(u)$,
where $\widetilde{B}^{\circ n}$ denotes the $n$-times composition
of $\widetilde{B}$. For the remainder of the article we write $\mathbbm{1}$
without an index for the $d\times d$-identity matrix, while $\mathbbm{1}_{A}$
denotes the standard indicator function of a set $A$.

\begin{thm}\label{thm: first moment} Let $p_{t}(x,\mathrm{d}\xi)$
be the transition kernel of an affine process on $\mathbb{S}_{d}^{+}$
with admissible parameters $(\alpha,b,B,m,\mu)$ satisfying
\begin{equation}
\int_{\{\Vert\xi\Vert>1\}}\Vert\xi\Vert m\left(\mathrm{d}\xi\right)<\infty.\label{eq:first moment}
\end{equation}
Then, for each $t\geq0$ and $x\in\mathbb{S}_{d}^{+}$,
\begin{equation}
\int_{\mathbb{S}_{d}^{+}}\xi p_{t}(x,\mathrm{d}\xi)=\mathrm{e}^{t\widetilde{B}}x+\int_{0}^{t}\mathrm{e}^{s\widetilde{B}}\left(b+\int_{\mathbb{S}_{d}^{+}\backslash\lbrace0\rbrace}\xi m(\mathrm{d}\xi)\right)\mathrm{d}s.\label{eq:first moment for affine processes}
\end{equation}
In particular, the first moment exists. \end{thm}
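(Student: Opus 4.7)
The plan is to read the first moment off the affine Laplace formula \eqref{eq:affine representation} by differentiating in $u$ at $u=0$. The first-moment hypothesis \eqref{eq:first moment}, together with admissibility item (iv), guarantees that $F$ and $R$ are $C^{1}$ on $\mathbb{S}_{d}^{+}$ with Fr\'echet derivatives at $u=0$ given by
\[
DF(0)\,v \;=\; \Bigl\langle b + \int_{\mathbb{S}_{d}^{+}\backslash\{0\}} \xi\,m(\mathrm{d}\xi),\, v\Bigr\rangle,\qquad DR(0)\,v \;=\; B^{\top}(v) + \int_{\mathbb{S}_{d}^{+}\backslash\{0\}} \langle v,\xi\rangle\,\mu(\mathrm{d}\xi);
\]
this is the announced content of Lemma \ref{lem:space-differentiability of F and R}. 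Standard smooth-dependence theory for locally Lipschitz ODEs then yields $\phi(t,\cdot), \psi(t,\cdot)\in C^{1}$ near $u=0$, uniformly on compact $t$-intervals. Because $R(0)=F(0)=0$ forces $\psi(t,0)=0$ and $\phi(t,0)=0$, differentiating the Riccati system \eqref{eq:riccati equation for phi}--\eqref{eq:riccati equation for psi} at $u=0$ produces the linear variational equations
\[
\partial_{t}\,D_{u}\psi(t,0) = DR(0)\circ D_{u}\psi(t,0),\ \ D_{u}\psi(0,0)=\mathrm{id},\qquad \partial_{t}\,D_{u}\phi(t,0) = DF(0)\circ D_{u}\psi(t,0),\ \ D_{u}\phi(0,0)=0,
\]
whose explicit solutions are $D_{u}\psi(t,0)=\exp(t\,DR(0))$ and $D_{u}\phi(t,0)=\int_{0}^{t} DF(0)\circ \exp(s\,DR(0))\,\mathrm{d}s$.

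To convert these derivatives into a statement about $\int \xi\,p_{t}(x,\mathrm{d}\xi)$ I would invoke monotone convergence. For every $y\geq 0$ the quotient $\epsilon\mapsto (1-\mathrm{e}^{-\epsilon y})/\epsilon$ is monotonically increasing to $y$ as $\epsilon\downarrow 0$, so applied pointwise with $y=\langle v,\xi\rangle$ for fixed $v\in\mathbb{S}_{d}^{+}$ and integrated against $p_{t}(x,\cdot)$ it gives
\[
\int_{\mathbb{S}_{d}^{+}}\langle v,\xi\rangle\,p_{t}(x,\mathrm{d}\xi) = \lim_{\epsilon\downarrow 0}\frac{1-\mathrm{e}^{-\phi(t,\epsilon v)-\langle\psi(t,\epsilon v),x\rangle}}{\epsilon} = D_{u}\phi(t,0)\,v + \langle D_{u}\psi(t,0)\,v,\,x\rangle,
\]
where the second equality uses the $C^{1}$-regularity established in the previous paragraph. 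The right-hand side is finite, which simultaneously proves existence of the first moment. Passing to Riesz representatives and taking Frobenius adjoints then recasts this as the vector identity $\int \xi\,p_{t}(x,\mathrm{d}\xi) = \exp(t\,DR(0))^{\top} x + \int_{0}^{t} \exp(s\,DR(0))^{\top}\bigl(b+\int \xi\,m(\mathrm{d}\xi)\bigr)\,\mathrm{d}s$.

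To finish, it remains to identify the adjoint operator $DR(0)^{\top}$ with the effective drift $\widetilde{B}$ from the statement. This reduces to showing that the linear map $u\mapsto \int \langle \xi,u\rangle\,\mu(\mathrm{d}\xi)$ is self-adjoint with respect to the Frobenius pairing, which I expect to follow by a direct Fubini argument exploiting the symmetry inherent in $\mu$ being an $\mathbb{S}_{d}^{+}$-valued measure. The two places where the argument requires genuine care are (i) the $C^{1}$-regularity of $F$ and $R$ at $u=0$ — this is exactly where \eqref{eq:first moment} and admissibility (iv) enter, since they license differentiation under the integrals defining $F$ and $R$ — and (ii) the adjoint identification just mentioned; everything else reduces to a standard linear-ODE and variation-of-constants computation.
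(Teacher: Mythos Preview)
Your approach is essentially the paper's own: differentiate the affine Laplace formula at $u=0$, identify $D_u\psi(t,0)$ and $D_u\phi(t,0)$ via the linearized Riccati system, and then pass to adjoints. The paper uses Fatou's lemma plus dominated convergence where you invoke monotone convergence of $(1-\mathrm{e}^{-\epsilon y})/\epsilon$; your version is slightly cleaner but equivalent in spirit.

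One point deserves more care than you give it. You appeal to ``standard smooth-dependence theory for locally Lipschitz ODEs'' to conclude $\psi(t,\cdot)\in C^{1}$ near $u=0$. The difficulty is that $u=0$ lies on the boundary of $\mathbb{S}_d^{+}$, and $R$ is only defined on the closed cone --- the integral term $\int(\mathrm{e}^{-\langle u,\xi\rangle}-1)\mu(\mathrm{d}\xi)$ does not extend to an open neighbourhood of $0$ in $\mathbb{S}_d$, since $\langle u,\xi\rangle$ can be arbitrarily negative. So the textbook theorem does not apply directly. The paper devotes Proposition~\ref{prop:space-differentiability of phi and psi} to this: it \emph{defines} $D\psi(t,u)$ on the boundary as the solution of the variational equation and then proves joint continuity by a Gronwall argument. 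This is not hard, but it is the substantive analytic step, and your proposal folds it into a single sentence. You should either carry out that Gronwall continuity argument or at least note that the smooth-dependence claim requires a one-sided extension argument rather than the off-the-shelf result.

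Conversely, you are right to flag the self-adjointness of $u\mapsto \int\langle\xi,u\rangle\,\mu(\mathrm{d}\xi)$ as a point requiring verification; the paper simply asserts $DR(0)=\widetilde{B}^{\top}$ without comment, so on that issue your proposal is actually more careful than the original.
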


Based on methods of stochastic calculus similar results were obtained
for affine processes with state space $\mathbb{R}_{\geq0}^{m}$ in
\cite[Lemma 3.4]{MR3340375} and on the canonical state space $\mathbb{R}_{\geq0}^{m}\times\mathbb{R}^{n}$
in \cite[Lemma 5.2]{2019arXiv190105815F}. For affine processes on
$\mathbb{R}_{\geq0}$, i.e., continuous-state branching processes
with immigration, and also for the more general class of Dawson-Watanabe
superprocesses an alternative approach based on a fine analysis of
the Laplace transform is provided in \cite{MR2760602}. The latter
approach has clearly the advantage that it is purely analytical and
does not rely on the use of stochastic equations and semimartingale
representations for these processes. We provide in Section \ref{sec:proof of first moment}
a purely analytic proof for Theorem \ref{thm: first moment} as well.

\begin{rem} Note that the transition kernel $p_{t}(x,\cdot)$ with
admissible parameters $(\alpha,b,B,m,\mu)$ is Feller by virtue of
\cite[Theorem 2.4]{MR2807963}. Therefore, there exists a canonical
realization $(X,(\mathbb{P}_{x})_{x\in\mathbb{S}_{d}^{+}})$ of the
corresponding Markov process on the filtered space $(\Omega,\mathcal{F},(\mathcal{F}_{t})_{t\geq0})$,
where $\Omega=\mathbb{D}(\mathbb{S}_{d}^{+})$ is the set of all c\`{a}dl\`{a}g
paths $\omega:\mathbb{R}_{\geq0}\to\mathbb{S}_{d}^{+}$ and $X_{t}(\omega)=\omega(t)$
for $\omega\in\Omega$. Here $(\mathcal{F}_{t})_{t\geq0}$ is the
natural filtration generated by $X$ and $\mathcal{F}=\bigvee_{t\geq0}\mathcal{F}_{t}$.
For $x\in\mathbb{S}_{d}^{+}$, the probability measure $\mathbb{P}_{x}$
on $\Omega$ represents the law of the Markov process $X$ given $X_{0}=x$.
With this notation, under the conditions of Theorem \ref{thm: first moment},
formula \eqref{eq:first moment for affine processes} reads
\[
\mathbb{E}_{x}\left[X_{t}\right]=\mathrm{e}^{t\widetilde{B}}x+\int_{0}^{t}\mathrm{e}^{s\widetilde{B}}\left(b+\int_{\mathbb{S}_{d}^{+}\backslash\lbrace0\rbrace}\xi m(\mathrm{d}\xi)\right)\mathrm{d}s,
\]
where $\mathbb{E}_{x}$ denotes the expectation with respect to $\mathbb{P}_{x}$.
\end{rem}

\subsection{Existence and convergence to the invariant distribution}

In this subsection we formulate our main result. Let $p_{t}(x,\cdot)$
be the transition kernel of an affine process on $\mathbb{S}_{d}^{+}$.
Motivated by Theorem \ref{thm: first moment} it is reasonable to
relate the long-time behavior of the process with the spectrum $\sigma(\widetilde{B})$
of $\widetilde{B}$. More precisely, an affine process on $\mathbb{S}_{d}^{+}$
with admissible parameters $(\alpha,b,B,m,\mu)$ is said to be \textit{subcritical},
if
\begin{equation}
\sup\left\lbrace \mathrm{Re}\thinspace\lambda\in\mathbb{C}\thinspace:\thinspace\lambda\in\sigma\left(\widetilde{B}\right)\right\rbrace <0.\label{eq:delta}
\end{equation}
Under condition \eqref{eq:delta}, it is well-known that there exist constants $M\ge 1$ and $\delta>0$
such that
\begin{equation}
\left\Vert \mathrm{e}^{t\widetilde{B}}\right\Vert \leq M\mathrm{e}^{-\delta t},\quad t\geq0.\label{subcritical}
\end{equation}
The next remark provides a sufficient condition for \eqref{subcritical}.

\begin{rem}\label{rem:equivalence for delta condition} According
to \cite[Theorem 2.7]{2018arXiv181110542M}, \eqref{subcritical} is
satisfied if and only if there exists a $v\in\mathbb{S}_{d}^{++}$
such that $-\widetilde{B}^{\top}(v)\in\mathbb{S}_{d}^{++}$. However,
in many application the linear drift is of the form $\widetilde{B}(x)=\beta x+x\beta^{\top}$,
where $\beta$ is a real-valued $d\times d$-matrix, see Section \ref{sec:applications}.
In this case, it follows from \cite[Corollary 5.1]{2018arXiv181110542M}
that \eqref{subcritical} is satisfied if and only if
\[
\sup\left\lbrace \mathrm{Re}\thinspace\lambda\in\mathbb{C}\thinspace:\thinspace\lambda\in\sigma\left(\beta\right)\right\rbrace <0,
\]
which in turn holds true if and only if there exists one $v\in\mathbb{S}_{d}^{++}$
such that $-(\beta^{\top}v+v\beta)\in\mathbb{S}_{d}^{++}$. \end{rem}

Let $\mathcal{P}(\mathbb{S}_{d}^{+})$ be the space of all Borel probability
measures on $\mathbb{S}_{d}^{+}$. We call $\pi\in\mathcal{P}(\mathbb{S}_{d}^{+})$
an invariant distribution, if
\[
\int_{\mathbb{S}_{d}^{+}}p_{t}(x,\mathrm{d}\xi)\pi(\mathrm{d}x)=\pi(\mathrm{d}\xi),\qquad t\geq0.
\]
The following is our main result.

\begin{thm}\label{thm:existence of limit distribution} Let $p_{t}(x,\mathrm{d}\xi)$
be the transition kernel of a subcritical affine process on $\mathbb{S}_{d}^{+}$
with admissible parameters $(\alpha,b,B,m,\mu)$. Suppose that the
measure $m$ satisfies
\begin{align}
\int_{\{\Vert\xi\Vert>1\}}\log\left\Vert \xi\right\Vert m\left(\mathrm{d}\xi\right)<\infty.\label{LOGMOMENT}
\end{align}
Then there exists a unique invariant distribution $\pi$. Moreover,
$p_{t}(x,\cdot)\to\pi$ weakly as $t\to\infty$ for each $x\in\mathbb{S}_{d}^{+}$
and $\pi$ has Laplace transform
\begin{equation}
\int_{\mathbb{S}_{d}^{+}}\mathrm{e}^{-\langle u,x\rangle}\pi\left(\mathrm{d}x\right)=\exp\left(-\int_{0}^{\infty}F\left(\psi(s,u)\right)\mathrm{d}s\right),\quad u\in\mathbb{S}_{d}^{+}.\label{eq:laplace transform of pi}
\end{equation}
\end{thm}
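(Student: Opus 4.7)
The plan is to work at the level of Laplace transforms, using the affine representation of Theorem~\ref{thm:characterization theorem}. By \eqref{eq:affine representation}, it suffices to show that for each fixed $u \in \mathbb{S}_d^+$, $\psi(t,u) \to 0$ as $t\to\infty$, that $\phi(t,u) = \int_0^t F(\psi(s,u))\,\mathrm{d}s$ converges to a finite limit $\Phi_0(u) := \int_0^\infty F(\psi(s,u))\,\mathrm{d}s$, and that $\Phi(u) := \exp(-\Phi_0(u))$ is the Laplace transform of a probability measure on $\mathbb{S}_d^+$. Invariance and uniqueness of $\pi$ then follow from standard arguments using the semigroup cocycle relations.

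I would first establish the asymptotic behaviour of $\psi$. Linearising the Riccati equation \eqref{eq:riccati equation for psi} at $0$ yields Fr\'echet derivative $\widetilde{B}^\top$, so by \eqref{subcritical} there is local exponential decay of $\psi(t,u)$ once it enters a neighbourhood of $0$. To obtain $\psi(t,u)\to 0$ for every $u \in \mathbb{S}_d^+$, I would use a probabilistic interpretation: the affine kernel with parameters $(\alpha, 0, B, 0, \mu)$ has Laplace exponent $\mathrm{e}^{-\langle \psi(t,u),x\rangle}$, and applying Theorem~\ref{thm: first moment} to it (the hypothesis \eqref{eq:first moment} is trivial since $m = 0$) gives $\mathbb{E}_x[X_t^{(0)}] = \mathrm{e}^{t\widetilde{B}}x \to 0$ by \eqref{subcritical}. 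Markov's inequality then forces $X_t^{(0)} \to 0$ in probability, hence $\langle \psi(t,u), x\rangle \to 0$ for every $x \in \mathbb{S}_d^+$; combined with the local rate this yields a uniform bound $\|\psi(s,u)\| \leq C_u \mathrm{e}^{-\delta s}$ for all $s\geq 0$.

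The main technical step is the integrability $\int_0^\infty F(\psi(s,u))\,\mathrm{d}s < \infty$; this is where the log-moment hypothesis \eqref{LOGMOMENT} enters. Writing $F(v) = \langle b, v\rangle + \int_{\mathbb{S}_d^+\setminus\{0\}}(1 - \mathrm{e}^{-\langle v,\xi\rangle})\,m(\mathrm{d}\xi)$, the drift part is controlled by the exponential decay of $\psi$. For the jump part, Fubini reduces matters to estimating
\[
I(\xi) := \int_0^\infty \bigl(1 - \mathrm{e}^{-\langle \psi(s,u),\xi\rangle}\bigr)\,\mathrm{d}s
\]
for each $\xi \in \mathbb{S}_d^+\setminus\{0\}$. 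On $\{\|\xi\|\leq 1\}$ the bound $1-\mathrm{e}^{-a}\leq a$ yields $I(\xi) \leq C_u \|\xi\|$, integrable against $m$ by admissibility condition (iii). On $\{\|\xi\|>1\}$, combining $1-\mathrm{e}^{-a}\leq \min(1,a)$ with $\langle \psi(s,u),\xi\rangle \leq C_u \|\xi\|\mathrm{e}^{-\delta s}$ and splitting the $s$-integral at the time $s^*$ where $C_u\|\xi\|\mathrm{e}^{-\delta s^*} = 1$ gives, after an elementary computation, $I(\xi) \leq C'_u(1 + \log\|\xi\|)$. This is integrable against $m$ \emph{precisely} because of \eqref{LOGMOMENT}, and the matching of this logarithmic envelope with the assumed log-moment of $m$ is the principal obstacle of the proof.

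Once $\Phi$ is defined and finite, I would verify its continuity at $u=0$ by running the same estimates uniformly on bounded $u$-sets (via dominated convergence), and then invoke the Laplace-transform analogue of L\'evy's continuity theorem on the cone $\mathbb{S}_d^+$ to conclude that $\Phi$ is the Laplace transform of a unique $\pi \in \mathcal{P}(\mathbb{S}_d^+)$; weak convergence $p_t(x,\cdot) \to \pi$ follows from pointwise convergence of Laplace transforms. Invariance of $\pi$ comes from passing to the limit $t\to\infty$ in the Chapman--Kolmogorov identity, using the cocycle relations $\psi(t+s,u) = \psi(s,\psi(t,u))$ and $\phi(t+s,u) = \phi(s,u) + \phi(t,\psi(s,u))$ built into \eqref{eq:affine representation}. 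Uniqueness is immediate: any invariant $\pi'$ satisfies $\widehat{\pi'}(u) = \mathrm{e}^{-\phi(t,u)}\widehat{\pi'}(\psi(t,u))$, and letting $t\to\infty$ with $\psi(t,u)\to 0$ forces $\widehat{\pi'} = \Phi$.
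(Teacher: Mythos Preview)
Your overall architecture---pointwise convergence of Laplace transforms via an exponential bound on $\psi$, integrability of $F(\psi(s,u))$ from the log-moment, L\'evy's continuity theorem, then invariance and uniqueness via the flow property---matches the paper's proof, and your treatment of the $F$-integral (splitting at $s^*$) and of invariance/uniqueness is essentially the argument of Lemma~\ref{lem:bound for F(psi(s,u))} and Section~\ref{sec:proof of ergodicity}.

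There is, however, a genuine gap in your derivation of the exponential bound on $\psi$. You invoke ``the affine kernel with parameters $(\alpha,0,B,0,\mu)$'' and apply Theorem~\ref{thm: first moment} to it. But $(\alpha,0,B,0,\mu)$ is \emph{not} an admissible parameter set in the sense of Definition~\ref{def:admissible parameters} unless $\alpha=0$: condition~(ii) requires $b\succeq (d-1)\alpha$, and for $d\geq 2$ this forces $\alpha=0$ when $b=0$. So the auxiliary process you want does not exist in general, and your Markov-inequality step cannot be carried out. This is exactly the obstruction the paper flags in the discussion following Theorem~\ref{th:wasserstein ergodicity}. Your linearisation step alone does not rescue the situation, since it only controls $\psi$ once it has entered a neighbourhood of~$0$, and you have no independent mechanism to force it there.

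The paper circumvents this in Proposition~\ref{prop:upper bound for psi} by a different route: keep the original $b$ (so admissibility holds), set only $m=0$, and apply Jensen's inequality to the Laplace transform of \emph{that} process together with the first-moment formula. This yields
\[
\langle x,\psi(t,u)\rangle + \int_0^t\langle b,\psi(s,u)\rangle\,\mathrm{d}s \;\leq\; \langle x,\mathrm{e}^{t\widetilde{B}^\top}u\rangle + \int_0^t\langle b,\mathrm{e}^{s\widetilde{B}^\top}u\rangle\,\mathrm{d}s.
\]
A short contradiction argument (Step~2 there) then shows this inequality persists for \emph{every} $b\in\mathbb{S}_d^+$, in particular $b=0$, whence $\langle x,\psi(t,u)\rangle\leq\langle x,\mathrm{e}^{t\widetilde{B}^\top}u\rangle$ for all $x\in\mathbb{S}_d^+$ and thus $\|\psi(t,u)\|\leq M\|u\|\mathrm{e}^{-\delta t}$ globally, with no linearisation needed. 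This also gives the bound linearly in $\|u\|$ (rather than with an unspecified $C_u$), which makes the continuity of $\int_0^\infty F(\psi(s,u))\,\mathrm{d}s$ at $u=0$ immediate. Once this bound is in place, the remainder of your argument goes through as written.
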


The proof of Theorem \ref{thm:existence of limit distribution} is
postponed to Section \ref{sec:proof of ergodicity}. Let us make a
few comments. Note that in dimension $d=1$ it holds $\mathbb{S}_{1}^{+}=\mathbb{R}_{\geq0}$
and affine processes on this state space coincide with the class of
continuous-state branching processes with immigration introduced by
Kawazu and Watanbe \cite{MR0290475}. In this case, the long-time
behavior has been extensively studied in the articles \cite[Theorem 3.16]{MR2390186},
\cite[Theorem 2.6]{MR2922631}, and the monograph \cite[Theorem 3.20 and Corollary 3.21]{MR2760602}.
This is why we restrict ourselves to the case $d\geq2$. Theorem \ref{thm:existence of limit distribution}
establishes sufficient conditions for the existence, uniqueness, and
convergence to the invariant distribution. For affine processes on
the canonical state space $\mathbb{R}_{\geq0}^{m}\times\mathbb{R}^{n}$
a similar statement was recently shown in \cite{2018arXiv181205402J}.

For dimension $d=1$ it is known that \eqref{LOGMOMENT} is not only
sufficient, but also necessary for the convergence to some limiting
distribution, see, e.g., \cite[Theorem 3.20 and Corollary 3.21]{MR2760602}.
To our knowledge, extensions of this result to higher dimensional
state space has not yet been obtained. In this context, we have the
following partial result for subcritical affine processes on $\mathbb{S}_{d}^{+}$.

\begin{prop}\label{prop:necessity for ergodicity} Let $p_{t}(x,\mathrm{d}\xi)$
be the transition kernel of a subcritical affine process on $\mathbb{S}_{d}^{+}$
with admissible parameters $(\alpha,b,B,m,\mu)$. Suppose that there
exists $x\in\mathbb{S}_{d}^{+}$ and $\pi\in\mathcal{P}(\mathbb{S}_{d}^{+})$
such that $p_{t}(x,\cdot)\to\pi$ weakly as $t\to\infty$. If $\alpha=0$
and there exists a constant $K>0$ satisfying
\begin{align}
K\xi+B(\xi)\succeq0,\quad\xi\in\mathbb{S}_{d}^{+},\label{eq:00}
\end{align}
then \eqref{LOGMOMENT} holds. \end{prop}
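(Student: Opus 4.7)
The plan is to exploit the weak convergence to extract finiteness of $\phi(\infty,u)=\int_0^\infty F(\psi(s,u))\,ds$, then combine the structural hypotheses $\alpha=0$ and $K\xi+B(\xi)\succeq 0$ into a matrix-valued Gronwall lower bound on $\psi(s,u)$, and finally read off the log-moment via a change of variable.

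First I would note that weak convergence $p_t(x,\cdot)\to\pi$ forces pointwise convergence of Laplace transforms, so $\phi(t,u)+\langle\psi(t,u),x\rangle\to -\log\widehat{\pi}(u)\in[0,\infty)$ for every $u\in\mathbb{S}_d^+$. Because $F(v)=\langle b,v\rangle+\int(1-\mathrm{e}^{-\langle v,\xi\rangle})m(\mathrm{d}\xi)\geq 0$ on $\mathbb{S}_d^+$, the function $t\mapsto\phi(t,u)$ is nondecreasing, hence converges separately, and
\[
\int_0^\infty\int_{\mathbb{S}_d^+\setminus\{0\}}\bigl(1-\mathrm{e}^{-\langle\psi(s,u),\xi\rangle}\bigr)\,m(\mathrm{d}\xi)\,\mathrm{d}s\leq \phi(\infty,u)<\infty.
\]

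The second and main step is to show $\psi(s,u)\succeq \mathrm{e}^{-Ks}u$ for $u\in\mathbb{S}_d^+$. Using the identity $\langle Ku+B^{\top}(u),\xi\rangle=\langle u,K\xi+B(\xi)\rangle\geq 0$ for $u,\xi\in\mathbb{S}_d^+$ together with the self-duality of $\mathbb{S}_d^+$, the hypothesis $K\xi+B(\xi)\in\mathbb{S}_d^+$ is equivalent to $Ku+B^{\top}(u)\in\mathbb{S}_d^+$ for all $u\in\mathbb{S}_d^+$. Since $\alpha=0$ and the $\mathbb{S}_d^+$-valued measure $\mu$ integrated against the nonnegative function $1-\mathrm{e}^{-\langle u,\xi\rangle}$ yields an element of $\mathbb{S}_d^+$, we obtain $R(u)\succeq -Ku$ on $\mathbb{S}_d^+$. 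The Riccati equation \eqref{eq:riccati equation for psi} then gives $\frac{\mathrm{d}}{\mathrm{d}s}\bigl[\mathrm{e}^{Ks}\psi(s,u)\bigr]\succeq 0$, and integrating (entrywise) yields the claimed bound. This is the heart of the argument; without $\alpha=0$ the term $-2u\alpha u$ would destroy the comparison.

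For the final step, take $u=\mathbbm{1}$. For $\xi\in\mathbb{S}_d^+$ with eigenvalues $\lambda_1,\ldots,\lambda_d\geq 0$ one has $(\sum_i\lambda_i)^2\geq \sum_i\lambda_i^2$, hence $\mathrm{tr}(\xi)\geq\|\xi\|$, so $\langle\psi(s,\mathbbm{1}),\xi\rangle\geq \mathrm{e}^{-Ks}\|\xi\|$. By monotonicity of $x\mapsto 1-\mathrm{e}^{-x}$, the first display above implies
\[
\int_0^\infty\int_{\mathbb{S}_d^+\setminus\{0\}}\bigl(1-\mathrm{e}^{-\mathrm{e}^{-Ks}\|\xi\|}\bigr)\,m(\mathrm{d}\xi)\,\mathrm{d}s<\infty.
\]
Applying Fubini and the successive substitutions $r=\mathrm{e}^{-Ks}$ and $t=r\|\xi\|$, the left-hand side becomes $\frac{1}{K}\int_{\mathbb{S}_d^+\setminus\{0\}}\!\int_0^{\|\xi\|}\tfrac{1-\mathrm{e}^{-t}}{t}\,\mathrm{d}t\,m(\mathrm{d}\xi)$. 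The inner integral is bounded below by $(1-\mathrm{e}^{-1})\log\|\xi\|$ once $\|\xi\|\geq 1$, so \eqref{LOGMOMENT} follows.
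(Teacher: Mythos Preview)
Your proof is correct and follows essentially the same route as the paper: extract $\int_0^\infty F(\psi(s,u))\,\mathrm{d}s<\infty$ from weak convergence, establish the lower bound $\psi(s,u)\succeq \mathrm{e}^{-Ks}u$ from $\alpha=0$ and \eqref{eq:00}, and then convert the time integral into the log-moment via the same change of variables. Your derivation of the lower bound via $\tfrac{\mathrm{d}}{\mathrm{d}s}\bigl[\mathrm{e}^{Ks}\psi(s,u)\bigr]\succeq 0$ is a slightly cleaner packaging of what the paper does through a Duhamel representation of $\psi(t,u)-\mathrm{e}^{-Kt}u$ (Proposition~\ref{prop:lower bound for psi}); note only that ``integrating entrywise'' should be read as ``integration preserves the order $\succeq$'', which is what you actually use.
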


We note that any linear map $B:\mathbb{S}_d\to\mathbb{S}_d$ which leaves $\mathbb{S}_d^+$ invariant satisfies condition \eqref{eq:00} for each $K>0$. As an example of such a map,  let $B(x)=\beta x\beta^{\top}$ for $x\in\mathbb{S}_d$, where $\beta$ is a real-valued invertible $d\times d$-matrix. Obviously,  $B$ defined in this way is admissible in the sense of Definition \ref{def:admissible parameters} and $B(\mathbb{S}_d^+)=\mathbb{S}_d^+$. Moreover, in view of \cite[Theorem 2]{MR0173678},  any linear map that leaves $\mathbb{S}_d^+$ invariant must be of this form.

In order to prove Theorem \ref{thm:existence of limit distribution}
and Proposition \ref{prop:necessity for ergodicity} we first establish
in Section \ref{sec:estimates on psi} precise lower and upper bounds
for $\psi(t,u)$. Since in dimension $d\geq2$ different components
of the process interact through the drift $B$ in a nontrivial manner
on $\mathbb{S}_{d}^{+}$, the proof of the lower bound is deduced
from the additional conditions $\alpha=0$ and \eqref{eq:00}, which
guarantees that these components are coupled in a \textit{well-behaved
way}.

We close this section with a useful moment result regarding the invariant
distribution.

\begin{cor}\label{cor:first moment of pi} Let $p_{t}(x,\mathrm{d}\xi)$
be the transition kernel of a subcritical affine process on $\mathbb{S}_{d}^{+}$
with admissible parameters $(\alpha,b,B,m,\mu)$ satisfying \eqref{eq:first moment}.
Let $\pi$ be the unique invariant distribution. Then
\[
\lim_{t\to\infty}\int_{\mathbb{S}_{d}^{+}}yp_{t}(x,\mathrm{d}y)=\int_{\mathbb{S}_{d}^{+}}y\pi(\mathrm{d}y)=\int_{0}^{\infty}\mathrm{e}^{s\widetilde{B}}\left(b+\int_{\mathbb{S}_{d}^{+}\backslash\lbrace0\rbrace}\xi m(\mathrm{d}\xi)\right)\mathrm{d}s.
\]
\end{cor}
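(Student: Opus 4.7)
The plan is to combine the explicit moment formula from Theorem \ref{thm: first moment} with the invariance of $\pi$, using subcriticality to pass to the limit. The input hypothesis \eqref{eq:first moment} is stronger than \eqref{LOGMOMENT}, so both Theorem \ref{thm: first moment} and Theorem \ref{thm:existence of limit distribution} apply. In particular $\pi$ exists, is unique, and $p_t(x,\cdot)\to\pi$ weakly.

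First, I would handle the limit $\lim_{t\to\infty}\int_{\mathbb{S}_d^+} y\,p_t(x,\mathrm{d}y)$. Theorem \ref{thm: first moment} gives the exact identity
\[
\int_{\mathbb{S}_d^+} y\,p_t(x,\mathrm{d}y) = \mathrm{e}^{t\widetilde{B}}x + \int_0^t \mathrm{e}^{s\widetilde{B}}\Bigl(b+\int_{\mathbb{S}_d^+\setminus\{0\}}\xi\,m(\mathrm{d}\xi)\Bigr)\mathrm{d}s,
\]
and subcriticality together with \eqref{subcritical} yields $\|\mathrm{e}^{t\widetilde{B}}x\|\le M\mathrm{e}^{-\delta t}\|x\|\to 0$ and absolute convergence of the integral to $\int_0^\infty \mathrm{e}^{s\widetilde{B}}(b+\int \xi\,m(\mathrm{d}\xi))\mathrm{d}s$. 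So the first and last terms in the corollary agree.

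The main task is to identify this limit with the first moment $\int y\,\pi(\mathrm{d}y)$ of $\pi$, since weak convergence alone does not generally transfer first moments. The key observation is that the right-hand side of the displayed formula above is bounded uniformly in $t$, hence $\sup_{t\ge 0}\int \|y\|\,p_t(x,\mathrm{d}y)<\infty$. Applying Fatou's lemma (coordinate-wise, via e.g.\ $y\mapsto \langle y,e\rangle$ for $e\in\mathbb{S}_d^+$) together with weak convergence $p_t(x,\cdot)\to\pi$ then yields $\int \|y\|\,\pi(\mathrm{d}y)<\infty$, so $\pi$ has a finite first moment.

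Finally, I would exploit invariance. Integrating the formula of Theorem \ref{thm: first moment} against $\pi$ and using Fubini together with $\int p_t(z,\mathrm{d}y)\pi(\mathrm{d}z)=\pi(\mathrm{d}y)$ gives
\[
\int_{\mathbb{S}_d^+} y\,\pi(\mathrm{d}y) = \mathrm{e}^{t\widetilde{B}}\!\!\int_{\mathbb{S}_d^+}\!\! z\,\pi(\mathrm{d}z) + \int_0^t \mathrm{e}^{s\widetilde{B}}\Bigl(b+\int\xi\,m(\mathrm{d}\xi)\Bigr)\mathrm{d}s,
\]
where the interchange is justified by the finite first moment of $\pi$ and linearity of $\mathrm{e}^{t\widetilde{B}}$. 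Sending $t\to\infty$, the first term vanishes by \eqref{subcritical}, yielding the claimed identity $\int y\,\pi(\mathrm{d}y)=\int_0^\infty \mathrm{e}^{s\widetilde{B}}(b+\int\xi\,m(\mathrm{d}\xi))\mathrm{d}s$. I expect no serious obstacle: the only delicate point is avoiding the temptation to deduce first-moment convergence directly from weak convergence, which is sidestepped by the Fatou-plus-invariance argument rather than requiring a uniform integrability estimate.
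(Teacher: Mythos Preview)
Your proposal is correct, and the first two steps (computing $\lim_{t\to\infty}\int y\,p_t(x,\mathrm{d}y)$ via Theorem \ref{thm: first moment} and subcriticality, then showing $\pi\in\mathcal{P}_1(\mathbb{S}_d^+)$ by Fatou applied to the uniformly bounded first moments) coincide with the paper's argument.

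The third step differs. The paper does \emph{not} use invariance to compute $\int y\,\pi(\mathrm{d}y)$; instead it differentiates the explicit Laplace transform \eqref{eq:laplace transform of pi} of $\pi$, computing $\lim_{\varepsilon\searrow 0}\varepsilon^{-1}\int(1-\mathrm{e}^{-\langle\varepsilon u,y\rangle})\pi(\mathrm{d}y)$ in two ways and invoking the identity $D\psi(s,0)(u)=\mathrm{e}^{s\widetilde{B}^\top}u$ from the proof of Theorem \ref{thm: first moment}. This requires the regularity machinery of Proposition \ref{prop:space-differentiability of phi and psi} together with careful dominated convergence using the bound $\|\psi(s,\varepsilon u)\|\le M\|\varepsilon u\|\mathrm{e}^{-\delta s}$ from Proposition \ref{prop:upper bound for psi}. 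Your invariance argument is more elementary: once $\pi$ is known to have a finite first moment, integrating the formula of Theorem \ref{thm: first moment} against $\pi$ and using Fubini (justified since $\int\|y\|p_t(z,\mathrm{d}y)\le C(1+\|z\|)$ and $\pi\in\mathcal{P}_1$) gives a fixed-point equation whose limit as $t\to\infty$ is immediate. This sidesteps the Laplace-transform differentiation entirely. The paper's route has the virtue of parallelling the proof of Theorem \ref{thm: first moment} and exhibiting the moment as the derivative of the Laplace exponent, but your route is shorter and requires no additional analytic input beyond what is already established.
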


\subsection{Study of convergence rate}

Noting that $\delta$ defined by \eqref{subcritical} is supposed
to be strictly positive, we will see that it appears naturally in
the rate of convergence towards the invariant distribution. In order
to measure this rate of convergence we introduce
\[
d_{L}(\eta,\nu):=\sup_{u\in\mathbb{S}_{d}^{+}\setminus\{0\}}\frac{1}{\Vert u\Vert}\left|\int_{\mathbb{S}_{d}^{+}}\mathrm{e}^{-\langle u,x\rangle}\eta(\mathrm{d}x)-\int_{\mathbb{S}_{d}^{+}}\mathrm{e}^{-\langle u,x\rangle}\nu(\mathrm{d}x)\right|,\quad\eta,\thinspace\nu\in\mathcal{P}(\mathbb{S}_{d}^{+}).
\]
Note that this supremum is not necessarily finite.
However, it is finite for elements of
\[
\mathcal{P}_{1}(\mathbb{S}_{d}^{+})=\left\lbrace \varrho\in\mathcal{P}(\mathbb{S}_{d}^{+})\thinspace:\thinspace\int_{\mathbb{S}_{d}^{+}}\Vert x\Vert\varrho(\mathrm{d}x)<\infty\right\rbrace .
\]
Then it is easy to see that $d_L$ is a metric on $\mathcal{P}_1(\mathbb{S}_{d}^{+})$; moreover, $\big(\mathcal{P}_1(\mathbb{S}_{d}^{+}),d_L\big)$ is complete. Using well-known properties of Laplace transforms, it can be shown
that convergence with respect to $d_{L}$ is stronger than weak convergence. 
The next result
provides an exponential rate in $d_{L}$ distance.

\begin{thm}\label{thm:convergence dL} Let $p_{t}(x,\mathrm{d}\xi)$
be the transition kernel of a subcritical affine process on $\mathbb{S}_{d}^{+}$
with admissible parameters $(\alpha,b,B,m,\mu)$. Suppose that \eqref{LOGMOMENT}
holds and denote by $\pi$ the unique invariant distribution. Then
there exists a constant $C>0$ such that
\begin{equation}
d_{L}\left(p_{t}(x,\cdot),\pi\right)\leq C\left(1+\Vert x\Vert\right)\mathrm{e}^{-\delta t},\quad t\geq0,\ \ x\in\mathbb{S}_{d}^{+}.\label{eq:convergence speed of pi}
\end{equation}
\end{thm}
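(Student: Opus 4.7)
The plan is to work directly with the Laplace transforms (see \eqref{eq:affine representation} and \eqref{eq:laplace transform of pi}) and reduce the estimate to two exponentially decaying pieces. Since $\phi(t,u) = \int_0^t F(\psi(s,u))\,\mathrm{d}s$, both exponents $A(t,u,x) := \phi(t,u) + \langle \psi(t,u),x\rangle$ and $B(u) := \int_0^\infty F(\psi(s,u))\,\mathrm{d}s$ are non-negative. Applying the elementary inequality $|\mathrm{e}^{-a} - \mathrm{e}^{-b}| \leq |a-b|$ for $a,b \geq 0$ together with the triangle inequality gives
\[
\left|\int_{\mathbb{S}_d^+} \mathrm{e}^{-\langle u,\xi\rangle}\bigl(p_t(x,\mathrm{d}\xi) - \pi(\mathrm{d}\xi)\bigr)\right| \leq \|\psi(t,u)\|\,\|x\| + \int_t^\infty F(\psi(s,u))\,\mathrm{d}s.
\]

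For the first term I would establish the cone-wise comparison $\psi(t,u) \preceq \mathrm{e}^{t\widetilde{B}^\top}u$ on $\mathbb{S}_d^+$ via an ODE comparison argument. Rewriting
\[
R(u) = \widetilde{B}^\top(u) - 2u\alpha u - \int_{\mathbb{S}_d^+\setminus\{0\}}\bigl(\langle u,\xi\rangle - (1-\mathrm{e}^{-\langle u,\xi\rangle})\bigr)\mu(\mathrm{d}\xi),
\]
and using $1-\mathrm{e}^{-x} \leq x$ shows that the two correction terms lie in $-\mathbb{S}_d^+$ for $u \in \mathbb{S}_d^+$, so $R(u) \preceq \widetilde{B}^\top(u)$. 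Combined with the quasi-monotonicity of $\widetilde{B}^\top$ with respect to $\mathbb{S}_d^+$ (inherited from admissibility condition (v) together with the cone-preserving jump perturbation), the standard comparison theorem for ODEs on cones yields the inequality. Monotonicity of the Frobenius norm on $\mathbb{S}_d^+$ and the subcritical bound \eqref{subcritical} then give $\|\psi(t,u)\| \leq M\mathrm{e}^{-\delta t}\|u\|$, so the first term divided by $\|u\|$ is at most $M\|x\|\mathrm{e}^{-\delta t}$.

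For the second term I use the semigroup property $\psi(s+t,u) = \psi(s,\psi(t,u))$ and the above comparison applied at $v := \psi(t,u)$; monotonicity of $F$ on $\mathbb{S}_d^+$ (clear from its explicit form) then gives
\[
\int_t^\infty F(\psi(s,u))\,\mathrm{d}s \leq \int_0^\infty F(\mathrm{e}^{s\widetilde{B}^\top}v)\,\mathrm{d}s.
\]
The aim is to bound the right-hand side by a constant multiple of $\|v\|$, since $\|v\| \leq M\mathrm{e}^{-\delta t}\|u\|$ then closes the argument. I would interchange the order of integration in the $m$-integral defining $F$, apply $1-\mathrm{e}^{-x} \leq x\wedge 1$ together with $\|\mathrm{e}^{s\widetilde{B}^\top}v\| \leq M\mathrm{e}^{-\delta s}\|v\|$ to compute $\int_0^\infty\bigl((M\mathrm{e}^{-\delta s}\|v\|\,\|\xi\|)\wedge 1\bigr)\,\mathrm{d}s$ explicitly, and then control the resulting $\xi$-integration via admissibility of $m$ and the log moment \eqref{LOGMOMENT}. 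The main obstacle is precisely this last estimate: under only \eqref{LOGMOMENT} the naive bound $F(w) \leq C\|w\|$ is not directly available, so the logarithmic correction arising from large jumps of $m$ must be absorbed carefully against the exponential decay of $\|\mathrm{e}^{s\widetilde{B}^\top}v\|$ in the interchanged integral. Putting these estimates together and dividing by $\|u\|$ then yields $d_L(p_t(x,\cdot),\pi) \leq C(1+\|x\|)\mathrm{e}^{-\delta t}$.
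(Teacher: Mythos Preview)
Your approach is correct and follows the same overall route as the paper: write out the Laplace transforms, use $|\mathrm{e}^{-a}-\mathrm{e}^{-b}|\leq|a-b|$ for $a,b\geq 0$, split into the two pieces $\|\psi(t,u)\|\,\|x\|$ and $\int_t^\infty F(\psi(s,u))\,\mathrm{d}s$, and bound each by a constant multiple of $\|u\|\mathrm{e}^{-\delta t}$.

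The one genuine methodological difference is how you obtain $\|\psi(t,u)\|\leq M\|u\|\mathrm{e}^{-\delta t}$. You argue by ODE comparison on the cone: rewrite $R(u)=\widetilde{B}^{\top}(u)-2u\alpha u-\int(\langle u,\xi\rangle-1+\mathrm{e}^{-\langle u,\xi\rangle})\mu(\mathrm{d}\xi)\preceq\widetilde{B}^{\top}(u)$, note that $\widetilde{B}^{\top}$ is quasi-monotone on $\mathbb{S}_d^+$, and conclude $\psi(t,u)\preceq\mathrm{e}^{t\widetilde{B}^{\top}}u$. The paper instead proves this in Proposition~4.1 by applying Jensen's inequality to the Laplace transform of an auxiliary affine process with $m=0$ and invoking the first-moment formula of Theorem~\ref{thm: first moment}. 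Your route is more self-contained; the paper's avoids cone comparison principles at the cost of introducing the auxiliary kernel $q_t$.

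For the tail integral your detour via the semigroup identity $\psi(s+t,u)=\psi(s,\psi(t,u))$, the comparison $\psi(s,v)\preceq\mathrm{e}^{s\widetilde{B}^{\top}}v$, and monotonicity of $F$ is valid but unnecessary. Once $\|\psi(s,u)\|\leq M\|u\|\mathrm{e}^{-\delta s}$ is in hand, the paper (Lemma~5.1) estimates $F(\psi(s,u))$ directly: the drift part is immediate, and for the jump part one uses $1-\mathrm{e}^{-\langle\psi(s,u),\xi\rangle}\leq 1\wedge(M\|u\|\,\|\xi\|\mathrm{e}^{-\delta s})$ together with a logarithmic product inequality to absorb the log-moment of $m$, obtaining the pointwise bound $F(\psi(s,u))\leq C\|u\|\mathrm{e}^{-\delta s}$. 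Integrating this from $t$ to $\infty$ gives $(C/\delta)\|u\|\mathrm{e}^{-\delta t}$ without any interchange of integration order, which is exactly the estimate you were aiming for and resolves the ``main obstacle'' you flagged.
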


The proof of this result is given in Section \ref{sec:proof of convergence in dL}. Although under the given conditions $p_t(x,\cdot)$ and $\pi$ do not necessarily belong to $\mathcal{P}_1(\mathbb{S}_d^+)$, the proof of \eqref{eq:convergence speed of pi} implies that $d_L(p_t(x,\cdot), \pi)$ is well-defined.

We turn to investigate the convergence rate from the affine transition
kernel to the invariant distribution in the Wasserstein-1-distance
introduced below. Given $\varrho,\thinspace\widetilde{\varrho}\in\mathcal{P}_{1}(\mathbb{S}_{d}^{+})$,
a coupling $H$ of $(\varrho,\widetilde{\varrho})$ is a Borel probability
measure on $\mathbb{S}_{d}^{+}\times\mathbb{S}_{d}^{+}$ which has
marginals $\varrho$ and $\widetilde{\varrho}$, respectively. We
denote by $\mathcal{H}(\varrho,\widetilde{\varrho})$ the collection
of all such couplings. We define the \emph{Wasserstein distance} on
$\mathcal{P}_{1}(\mathbb{S}_{d}^{+})$ by
\[
W_{1}\left(\varrho,\widetilde{\varrho}\right)=\inf\left\lbrace \int_{\mathbb{S}_{d}^{+}\times\mathbb{S}_{d}^{+}}\Vert x-y\Vert H\left(\mathrm{d}x,\mathrm{d}y\right)\thinspace:\thinspace H\in\mathcal{H}\left(\varrho,\widetilde{\varrho}\right)\right\rbrace .
\]
Since $\varrho$ and $\widetilde{\varrho}$ belong to $\mathcal{P}_{1}(\mathbb{S}_{d}^{+})$,
it holds that $W_{1}(\varrho,\widetilde{\varrho})$ is finite. According
to \cite[Theorem 6.16]{MR2459454}, we have that $(\mathcal{P}(\mathbb{S}_{d}^{+}),W_{1})$
is a complete separable metric space. Exponential ergodicity in different
Wasserstein distances for affine processes on the canonical state
space $\mathbb{R}_{+}^{m}\times\mathbb{R}^{n}$ was very recently
studied in \cite{2019arXiv190105815F}. Below we provide a corresponding
result for affine processes on $\mathbb{S}_{d}^{+}$.

\begin{thm}\label{th:wasserstein ergodicity} Let $p_{t}(x,\mathrm{d}\xi)$
be the transition kernel of a subcritical affine process on $\mathbb{S}_{d}^{+}$
with admissible parameters $(\alpha,b,B,m,\mu)$ satisfying \eqref{eq:first moment}.
If $\alpha=0$, then
\begin{equation}
W_{1}\left(p_{t}(x,\cdot),\pi\right)\leq\sqrt{d}Me^{-\delta t}\left(\|x\|+\int_{\mathbb{S}_{d}^{+}}\|y\|\pi(\mathrm{d}y)\right),\quad t\geq0,\ \ x\in\mathbb{S}_{d}^{+}.\label{eq:05}
\end{equation}
\end{thm}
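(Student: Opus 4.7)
The plan is to combine the branching–immigration decomposition of affine processes with a branching-property coupling, reducing everything to a first-moment computation already given by Theorem~\ref{thm: first moment}. First I would split the admissible parameter set $(\alpha,b,B,m,\mu)$ into an \emph{immigration} part $(0,b,B,m,0)$ and a \emph{branching} part $(\alpha,0,B,0,\mu)$. Both candidate sets are admissible in the sense of Definition~\ref{def:admissible parameters}; the crucial point is that admissibility of the branching part requires $0=b\succeq(d-1)\alpha$, which forces $\alpha=0$. This is precisely where the hypothesis $\alpha=0$ enters the argument. Let $q_t$ and $\tilde p_t(x,\cdot)$ denote the two resulting transition kernels, with Laplace exponents $(\phi,0)$ and $(0,\psi)$ respectively (obtained by setting the relevant parameters to zero in the formulas for $F$ and $R$). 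Comparing Laplace transforms via \eqref{eq:affine representation} yields both the immigration decomposition $p_t(x,\cdot)=\tilde p_t(x,\cdot)*q_t$ and the branching property $\tilde p_t(x+y,\cdot)=\tilde p_t(x,\cdot)*\tilde p_t(y,\cdot)$ for all $x,y\in\mathbb{S}_d^+$.

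For fixed $x,y\in\mathbb{S}_d^+$ I would then construct the coupling by taking mutually independent $\tilde Y^x\sim\tilde p_t(x,\cdot)$, $\tilde Y^y\sim\tilde p_t(y,\cdot)$ and $Z\sim q_t$, and setting $X^x:=\tilde Y^x+Z$, $X^y:=\tilde Y^y+Z$, $X^{x+y}:=\tilde Y^x+\tilde Y^y+Z$. By the two Laplace identities above, these three random variables have the correct marginals $p_t(x,\cdot)$, $p_t(y,\cdot)$, $p_t(x+y,\cdot)$, and by construction $X^{x+y}-X^x=\tilde Y^y$ lies in $\mathbb{S}_d^+$ almost surely. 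Combining the elementary inequalities $\|A\|\leq\mathrm{tr}(A)\leq\sqrt{d}\,\|A\|$ for $A\in\mathbb{S}_d^+$ (the second via Cauchy--Schwarz against the identity) with Theorem~\ref{thm: first moment} applied to the branching subprocess---for which $b=0=m$, so \eqref{eq:first moment} holds trivially---I would obtain $\mathbb{E}[\tilde Y^y]=\mathrm{e}^{t\widetilde B}y$ and hence
\[
\mathbb{E}\|X^{x+y}-X^x\|\;\leq\;\mathrm{tr}\bigl(\mathrm{e}^{t\widetilde B}y\bigr)\;\leq\;\sqrt{d}\,\bigl\|\mathrm{e}^{t\widetilde B}y\bigr\|\;\leq\;\sqrt{d}\,M\mathrm{e}^{-\delta t}\|y\|,
\]
and symmetrically for $X^{x+y}-X^y$. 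The triangle inequality for $W_1$ then gives $W_1\bigl(p_t(x,\cdot),p_t(y,\cdot)\bigr)\leq\sqrt{d}\,M\mathrm{e}^{-\delta t}(\|x\|+\|y\|)$.

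Finally, invariance of $\pi$ writes $\pi=\int p_t(y,\cdot)\pi(\mathrm{d}y)$, so convexity of $W_1$ in its second argument yields
\[
W_1\bigl(p_t(x,\cdot),\pi\bigr)\;\leq\;\int_{\mathbb{S}_d^+}W_1\bigl(p_t(x,\cdot),p_t(y,\cdot)\bigr)\pi(\mathrm{d}y)\;\leq\;\sqrt{d}\,M\mathrm{e}^{-\delta t}\Bigl(\|x\|+\int_{\mathbb{S}_d^+}\|y\|\pi(\mathrm{d}y)\Bigr),
\]
and Corollary~\ref{cor:first moment of pi} guarantees that the integral $\int\|y\|\pi(\mathrm{d}y)$ is finite under \eqref{eq:first moment}. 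This is exactly \eqref{eq:05}. The main obstacle is conceptual rather than technical: recognizing that $\alpha=0$ is precisely what makes the affine process factor into an independent sum of an admissible pure-branching and an admissible pure-immigration component, and that the branching property can then be upgraded from a Laplace-transform identity to an honest coupling. The remaining ingredients---Theorem~\ref{thm: first moment}, the trace inequality, and convexity of $W_1$---are routine.
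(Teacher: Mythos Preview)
Your approach is essentially the paper's: both rely on the convolution decomposition $p_t(x,\cdot)=\tilde p_t(x,\cdot)*p_t(0,\cdot)$ (whose branching factor $\tilde p_t$ is an admissible affine kernel precisely when $\alpha=0$), bound $W_1(p_t(x,\cdot),p_t(y,\cdot))$ through the first moment $\mathrm{e}^{t\widetilde B}$ of $\tilde p_t$, and close with convexity of $W_1$ against $\pi$. One small inaccuracy to fix: the immigration factor is simply $q_t:=p_t(0,\cdot)$ with Laplace transform $\exp(-\phi(t,u))$ for the \emph{original} $\phi$; it is \emph{not} the kernel of an affine process with parameters $(0,b,B,m,0)$, since that process would have $R(u)=B^\top(u)$ and hence a different $\psi$ and $\phi$. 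This is harmless for your argument because $Z$ cancels in every difference you form. The only real variation from the paper is cosmetic: you build the coupling of $p_t(x,\cdot)$ and $p_t(y,\cdot)$ explicitly via the branching identity and a triangle through $p_t(x+y,\cdot)$, whereas the paper first invokes translation-invariance of $W_1$ under a common convolution (reducing to $W_1(\tilde p_t(x,\cdot),\tilde p_t(y,\cdot))$) and then uses the trivial product coupling; both routes land on the same bound $\sqrt{d}\,M\mathrm{e}^{-\delta t}(\|x\|+\|y\|)$, and your version has the advantage of being self-contained.
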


The proof of Theorem \ref{th:wasserstein ergodicity} is given in
Section \ref{sec:proof of exp ergodicity in wasserstein distance}
which largely follows some ideas of \cite{2019arXiv190105815F}. In
contrast to the latter work, for the study of affine processes on
$\mathbb{S}_{d}^{+}$ we encounter two additional difficulties:
\begin{itemize}
\item It is still an open problem whether each affine process on $\mathbb{S}_{d}^{+}$
can be obtained as a strong solution to a certain stochastic equation
driven by Brownian motions and Poisson random measures. We refer the
reader to \cite{MR2819242} for some related results. In addition,
we do not know if a comparison principle for such processes would
be available.
\item Following \cite{2019arXiv190105815F}, one important step in the proof
of Theorem 2.7 therein is based on the decomposition $p_{t}(x,\cdot)=r_{t}(x,\cdot)\ast p_{t}(0,\cdot)$,
where $r_{t}(x,\cdot)$ is the transition kernel of an affine process
on $\mathbb{S}_{d}^{+}$ whose Laplace transform is given by
\[
\int_{\mathbb{S}_{d}^{+}}\mathrm{e}^{-\langle u,\xi\rangle}r_{t}(x,\mathrm{d}\xi)=\exp\left(-\langle\psi(t,u),x\rangle\right),
\]
that is, $r_{t}(x,\cdot)$ should have admissible parameters $(\alpha,b=0,B,m=0,\mu)$.
Unfortunately, such transition kernel $r_{t}(x,\cdot)$ is well-defined
if and only if $(\alpha,b=0,B,m=0,\mu)$ are admissible parameters
in the sense of Definition \ref{def:admissible parameters}. This
in turn is true if and only if $\alpha=0$ which is a consequence
of the particular structure of the boundary $\mathbb{S}_{d}^{+}\backslash\mathbb{S}_{d}^{++}$.
\end{itemize}

\section{Proof of Theorem \ref{thm: first moment}}

\label{sec:proof of first moment}

In this section we study the first moment of a conservative affine
process on $\mathbb{S}_{d}^{+}$. In particular, we prove Theorem
\ref{thm: first moment}. Essential to the proof is the space-differentiability
of the functions $F$ and $R$ as well as $\phi$ and $\psi$. To
simplify the notation we introduce $L(\mathbb{S}_{d},\mathbb{S}_{d})$
as the space of all linear operators $\mathbb{S}_{d}\to\mathbb{S}_{d}$,
and similarly $L(\mathbb{S}_{d},\mathbb{R})$ stands
for the space of all linear functionals $\mathbb{S}_{d}\to\mathbb{R}$.
For a function $G:\mathbb{S}_{d}\to\mathbb{S}_{d}$ we denote its
derivative at $u\in\mathbb{S}_{d}$, if it exists,
by $DG(u)\in L(\mathbb{S}_{d},\mathbb{S}_{d})$. Similarly, we denote
the derivative of $H:\mathbb{S}_{d}\to\mathbb{R}$ by $DH(u)\in L(\mathbb{S}_{d},\mathbb{R})$.
We equip $L(\mathbb{S}_{d},\mathbb{S}_{d})$ and $L(\mathbb{S}_{d},\mathbb{R})$
with the corresponding norm
\[
\left\Vert DG(u)\right\Vert =\sup_{\Vert x\Vert=1}\left\Vert DG(u)(x)\right\Vert \quad\text{and}\quad\left\Vert DH(u)\right\Vert =\sup_{\Vert x\Vert=1}\left\Vert DH(u)(x)\right\Vert .
\]

Let $F$ and $R$ be as in Theorem \ref{thm:characterization theorem}.
According to \cite[Lemma 5.1]{MR2807963} the function
$R$ is analytic on $\mathbb{S}_{d}^{++}$. Below we study the differentiability
of $F$ and $R$  on the entire cone $\mathbb{S}_{d}^{+}$.

We first give a lemma that slightly extends \cite[Lemma 3.3]{MR2956112}.

\begin{lem}
\label{lem: triangle ineq.} Let $g$ be a measurable
function on $\mathbb{S}_{d}^{+}$ with $\int_{\mathbb{S}_{d}^{+}\backslash\lbrace0\rbrace}\left\vert g(\xi)\right\vert \mathrm{tr}(\mu)(\mathrm{d}\xi)<\infty$.
Then $\int_{\mathbb{S}_{d}^{+}\backslash\lbrace0\rbrace}g(\xi)\mu(\mathrm{d}\xi)$
is finite and
\[
\left\Vert \int_{\mathbb{S}_{d}^{+}\backslash\lbrace0\rbrace}g(\xi)\mu(\mathrm{d}\xi)\right\Vert \le\int_{\mathbb{S}_{d}^{+}\backslash\lbrace0\rbrace}\left\vert g(\xi)\right\vert \mathrm{tr}(\mu)(\mathrm{d}\xi).
\]
\end{lem}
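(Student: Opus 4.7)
The plan is to reduce the matrix-valued bound to a uniform scalar bound via dualisation. Using the Hilbertian structure of $(\mathbb{S}_d,\langle\cdot,\cdot\rangle)$,
\[
\left\|\int_{\mathbb{S}_d^+\backslash\{0\}}g(\xi)\mu(\mathrm{d}\xi)\right\| = \sup_{\|u\|=1}\int_{\mathbb{S}_d^+\backslash\{0\}}g(\xi)\,\langle u,\mu\rangle(\mathrm{d}\xi),
\]
where $\langle u,\mu\rangle$ denotes the scalar signed measure $A\mapsto\langle u,\mu(A)\rangle$. The whole statement then reduces to showing that the total variation of $\langle u,\mu\rangle$ is pointwise dominated by $\mathrm{tr}(\mu)$ whenever $\|u\|=1$.

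The key positivity input is that $\langle v,\mu\rangle$ is a non-negative measure for every $v\in\mathbb{S}_d^+$; this is immediate from $\mathrm{tr}(vw)\geq 0$ for $v,w\in\mathbb{S}_d^+$ together with the $\mathbb{S}_d^+$-valuedness of $\mu$ on sets separated from $0$, and in particular $\mathrm{tr}(\mu)=\langle\mathbbm{1},\mu\rangle$ is non-negative. For a general $u\in\mathbb{S}_d$ I would employ the spectral decomposition $u=u^+-u^-$ with $u^\pm\in\mathbb{S}_d^+$ and $u^+u^-=0$. Diagonalising $u$ shows that the operator norm of $u^++u^-$ equals $\max_i|\lambda_i(u)|\leq\|u\|$, so that $u^++u^-\preceq\mathbbm{1}$ whenever $\|u\|=1$, and by the same positivity argument
\[
\langle u^++u^-,\mu(A)\rangle \leq \langle\mathbbm{1},\mu(A)\rangle = \mathrm{tr}(\mu)(A).
\]

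Since $\langle u,\mu\rangle=\langle u^+,\mu\rangle-\langle u^-,\mu\rangle$ is a difference of non-negative measures, its total variation satisfies $|\langle u,\mu\rangle|\leq\langle u^++u^-,\mu\rangle\leq\mathrm{tr}(\mu)$ as measures. Applied entrywise this already yields finiteness of $\int g\,\mathrm{d}\mu$ under the hypothesis $\int|g|\,\mathrm{d}\mathrm{tr}(\mu)<\infty$, and combining with $\left|\int g\,\mathrm{d}\langle u,\mu\rangle\right|\leq\int|g|\,\mathrm{d}|\langle u,\mu\rangle|\leq\int|g|\,\mathrm{d}\mathrm{tr}(\mu)$ and taking the supremum over $\|u\|=1$ finishes the proof. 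The only mildly delicate step is the passage from the pointwise inequality $v\preceq\mathbbm{1}$ for the PSD test element $v=u^++u^-$ to a variation bound on the oscillating scalar measure $\langle u,\mu\rangle$; this is precisely what forces the spectral decomposition together with the Hahn--Jordan decomposition on the scalar side. Everything else is an immediate consequence of the order-theoretic properties built into the notion of an $\mathbb{S}_d^+$-valued measure.
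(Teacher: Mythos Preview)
Your argument is correct and takes a genuinely different route from the paper's. The paper invokes \cite[Lemma 3.3]{MR2956112} as a black box to obtain the inequality for non-negative integrands, then extends to signed $g$ by splitting $g=g^{+}-g^{-}$ and applying the triangle inequality in $\mathbb{S}_d$. You instead dualise: write the Frobenius norm as a supremum over unit test matrices $u\in\mathbb{S}_d$, and push the problem down to a uniform bound on the scalar signed measures $\langle u,\mu\rangle$. The decomposition is then performed on the \emph{test matrix} (spectral splitting $u=u^{+}-u^{-}$, with $u^{+}+u^{-}\preceq\mathbbm{1}$ following from $\|u\|=1$) rather than on the integrand, and the Hahn--Jordan bound $|\langle u,\mu\rangle|\le\langle u^{+}+u^{-},\mu\rangle\le\mathrm{tr}(\mu)$ does the rest.

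What each buys: the paper's proof is shorter because Mayerhofer's lemma absorbs the nontrivial order-theoretic step, but it is not self-contained. Your approach reproves that step from scratch and is arguably more transparent about \emph{why} $\mathrm{tr}(\mu)$ dominates: it is precisely the scalar measure attached to the largest admissible test element $\mathbbm{1}$, and every unit $u$ has modulus $u^{+}+u^{-}$ below $\mathbbm{1}$ in the Loewner order. Your entrywise finiteness remark is also slightly cleaner than the paper's Jordan-decomposition-per-entry argument, since choosing $u$ supported on a single coordinate pair directly yields $|\mu_{ij}|\lesssim\mathrm{tr}(\mu)$ in total variation.
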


\begin{proof}
Let $\mu=(\mu_{ij})$ and $\mu_{ij}=\mu_{ij}^{+}-\mu_{ij}^{-}$
be the Jordan decomposition of $\mu_{ij}$. Suppose $\int_{\mathbb{S}_{d}^{+}\backslash\lbrace0\rbrace}\left\vert g(\xi)\right\vert \mathrm{tr}(\mu)(\mathrm{d}\xi)<\infty$.
Then \cite[Lemma 3.3]{MR2956112} implies that $\int_{\mathbb{S}_{d}^{+}\backslash\lbrace0\rbrace}|g(\xi)|\mu(\mathrm{d}\xi)$
is finite and
\[
\left\Vert \int_{\mathbb{S}_{d}^{+}\backslash\lbrace0\rbrace}|g(\xi)|\mu(\mathrm{d}\xi)\right\Vert \le\int_{\mathbb{S}_{d}^{+}\backslash\lbrace0\rbrace}\left\vert g(\xi)\right\vert \mathrm{tr}(\mu)(\mathrm{d}\xi).
\]
Since the $ij$-th entry of $\int_{\mathbb{S}_{d}^{+}\backslash\lbrace0\rbrace}|g(\xi)|\mu(\mathrm{d}\xi)$
is given by
\[
\int_{\mathbb{S}_{d}^{+}\backslash\lbrace0\rbrace}|g(\xi)|\mu_{ij}^{+}(\mathrm{d}\xi)-\int_{\mathbb{S}_{d}^{+}\backslash\lbrace0\rbrace}|g(\xi)|\mu_{ij}^{-}(\mathrm{d}\xi),
\]
which is finite, we must have
\[
\int_{\mathbb{S}_{d}^{+}\backslash\lbrace0\rbrace}|g(\xi)|\mu_{ij}^{+}(\mathrm{d}\xi)<\infty\qquad\mbox{and\ensuremath{\qquad}}\int_{\mathbb{S}_{d}^{+}\backslash\lbrace0\rbrace}|g(\xi)|\mu_{ij}^{-}(\mathrm{d}\xi)<\infty,\quad\forall i,j\in\left\{ 1,\ldots d\right\} .
\]
So $\int_{\mathbb{S}_{d}^{+}\backslash\lbrace0\rbrace}g(\xi)\mu(\mathrm{d}\xi)$
is finite. Again by \cite[Lemma 3.3]{MR2956112},
\begin{align*}
\left\Vert \int_{\mathbb{S}_{d}^{+}\backslash\lbrace0\rbrace}g(\xi)\mu(\mathrm{d}\xi)\right\Vert  & =\left\Vert \int_{\mathbb{S}_{d}^{+}\backslash\lbrace0\rbrace}g^{+}(\xi)\mu(\mathrm{d}\xi)-\int_{\mathbb{S}_{d}^{+}\backslash\lbrace0\rbrace}g^{-}(\xi)\mu(\mathrm{d}\xi)\right\Vert \\
 & \le\left\Vert \int_{\mathbb{S}_{d}^{+}\backslash\lbrace0\rbrace}g^{+}(\xi)\mu(\mathrm{d}\xi)\right\Vert +\left\Vert \int_{\mathbb{S}_{d}^{+}\backslash\lbrace0\rbrace}g^{-}(\xi)\mu(\mathrm{d}\xi)\right\Vert \\
 & \le\int_{\mathbb{S}_{d}^{+}\backslash\lbrace0\rbrace}g^{+}(\xi)\mathrm{tr}(\mu)(\mathrm{d}\xi)+\int_{\mathbb{S}_{d}^{+}\backslash\lbrace0\rbrace}g^{-}(\xi)\mathrm{tr}(\mu)(\mathrm{d}\xi)\\
 & \le\int_{\mathbb{S}_{d}^{+}\backslash\lbrace0\rbrace}\left\vert g(\xi)\right\vert \mathrm{tr}(\mu)(\mathrm{d}\xi).
\end{align*}
The lemma is proved.
\end{proof}

\begin{lem}\label{lem:space-differentiability of F and R}
The following statements hold:
\begin{enumerate}
\item[(a)]  For $u\in\mathbb{S}_{d}^{++}$, $h\in\mathbb{S}_{d}$,
we have
\begin{equation}
DR(u)(h)=-2\left(u\alpha h+h\alpha u\right)+B^{\top}(h)+\int_{\mathbb{S}_{d}^{+}\backslash\lbrace0\rbrace}\langle h,\xi\rangle\mathrm{e}^{-\langle u,\xi\rangle}\mu(\mathrm{d}\xi).\label{eq: DR}
\end{equation}
Moreover, through \eqref{eq: DR} $DR(u)$ is continuously extended
to $u\in\mathbb{S}_{d}^{+}$. In particular, $R\in C^{1}(\mathbb{S}_{d}^{+})$
and \eqref{eq: DR} holds true for all $u\in\mathbb{S}_{d}^{+},h\in\mathbb{S}_{d}$.
\item[(b)] If \eqref{eq:first moment} is satisfied, then for
$u\in\mathbb{S}_{d}^{++}$, $h\in\mathbb{S}_{d}$,
\begin{equation}
DF(u)(h)=\langle b,h\rangle+\int_{\mathbb{S}_{d}^{+}\backslash\lbrace0\rbrace}\langle h,\xi\rangle\mathrm{e}^{-\langle u,\xi\rangle}m(\mathrm{d}\xi).\label{eq: DF}
\end{equation}
 Moreover, through \eqref{eq: DR} $DF(u)$ is continuously extended
to $u\in\mathbb{S}_{d}^{+}$. In particular, $F\in C^{1}(\mathbb{S}_{d}^{+})$
and \eqref{eq: DF} holds true for all $u\in\mathbb{S}_{d}^{+},h\in\mathbb{S}_{d}$.
\end{enumerate}
\end{lem}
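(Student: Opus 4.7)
The plan is to prove (a) in three steps: first establish \eqref{eq: DR} on the open interior $\mathbb{S}_d^{++}$; then show that the right-hand side of \eqref{eq: DR} extends continuously to all of $\mathbb{S}_d^+$; and finally upgrade this to Fr\'echet differentiability on the entire closed cone. Part (b) will then follow by the same scheme with $m$ in place of $\mu$.

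For $u\in\mathbb{S}_d^{++}$, differentiability of the polynomial term $-2u\alpha u$ and of the linear term $B^{\top}(u)$ is immediate. For the jump integral I differentiate under the integral sign: by the mean-value theorem,
\[
\frac{1}{\varepsilon}\bigl[e^{-\langle u+\varepsilon h,\xi\rangle}-e^{-\langle u,\xi\rangle}\bigr] \;=\; -\langle h,\xi\rangle\, e^{-\langle u+\theta\varepsilon h,\xi\rangle}
\]
for some $\theta=\theta(\varepsilon,\xi)\in[0,1]$, and for all sufficiently small $\varepsilon$ we have $u+\theta\varepsilon h\in\mathbb{S}_d^{+}$, so the right-hand side is dominated by $\|h\|\|\xi\|$. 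Admissibility (iv) gives $\int\|\xi\|\,\mathrm{tr}(\mu)(\mathrm{d}\xi)<\infty$, and combining Lemma \ref{lem: triangle ineq.} (to pass from the matrix-valued integral to one against the scalar measure $\mathrm{tr}(\mu)$) with dominated convergence yields \eqref{eq: DR} on $\mathbb{S}_d^{++}$. Continuity of the right-hand side of \eqref{eq: DR} on all of $\mathbb{S}_d^{+}$ follows from the same estimate: for any sequence $u_n\to u$ in $\mathbb{S}_d^{+}$ the integrand $\langle h,\xi\rangle\, e^{-\langle u_n,\xi\rangle}$ is dominated by $\|h\|\|\xi\|$ and converges pointwise, so dominated convergence together with Lemma \ref{lem: triangle ineq.} gives the continuity of $DR$ on $\mathbb{S}_d^{+}$.

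To obtain Fr\'echet differentiability at a boundary point $u_0\in\mathbb{S}_d^{+}\setminus\mathbb{S}_d^{++}$, let $h\in\mathbb{S}_d$ satisfy $u_0+h\in\mathbb{S}_d^{+}$. By convexity of $\mathbb{S}_d^{+}$, $u_0+sh\in\mathbb{S}_d^{+}$ for every $s\in[0,1]$, so Taylor's formula with integral remainder applied to $s\mapsto e^{-\langle u_0+sh,\xi\rangle}$ yields the pointwise estimate
\[
\bigl|e^{-\langle u_0+h,\xi\rangle}-e^{-\langle u_0,\xi\rangle}+\langle h,\xi\rangle\, e^{-\langle u_0,\xi\rangle}\bigr| \;\leq\; \min\bigl(2\|h\|\|\xi\|,\; \tfrac{1}{2}\|h\|^{2}\|\xi\|^{2}\bigr).
\]
Dividing by $\|h\|$, the right-hand side becomes $\min(2\|\xi\|,\tfrac{1}{2}\|h\|\|\xi\|^{2})$, which is dominated by the $\mathrm{tr}(\mu)$-integrable function $2\|\xi\|$ and tends pointwise to $0$ as $h\to 0$. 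Dominated convergence and Lemma \ref{lem: triangle ineq.} therefore show that the remainder in $R(u_0+h)-R(u_0)-DR(u_0)(h)$ is $o(\|h\|)$, and together with the continuity of $DR$ already established this proves $R\in C^{1}(\mathbb{S}_d^{+})$.

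Part (b) is handled by the same three-step argument with $m$ in place of $\mu$: since $m$ is a scalar Borel measure, Lemma \ref{lem: triangle ineq.} is not needed, and the role of $\int\|\xi\|\,\mathrm{tr}(\mu)(\mathrm{d}\xi)<\infty$ is played by $\int\|\xi\|\,m(\mathrm{d}\xi)<\infty$, which follows by combining admissibility (iii) with the first-moment hypothesis \eqref{eq:first moment}. I expect the most delicate point to be the extension of differentiability to the boundary $\partial\mathbb{S}_d^{+}$, where one loses the useful interior decay $e^{-\langle u,\xi\rangle}\leq e^{-c\|\xi\|}$ and must instead rely on the cone estimate $e^{-\langle u_0+sh,\xi\rangle}\leq 1$ afforded by convexity, which is precisely what makes the Taylor argument above work.
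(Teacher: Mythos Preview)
Your proposal is correct and follows essentially the same approach as the paper. The paper also writes out the remainder $R(u+h)-R(u)-DR(u)(h)$ and bounds its integral part by a small-$\xi$/large-$\xi$ split (getting $|\langle h,\xi\rangle|^2$ on $\{\|\xi\|\le M\}$ and $2|\langle h,\xi\rangle|$ on $\{\|\xi\|>M\}$), then runs an explicit $\varepsilon$--$M$ argument; your packaging via the single bound $\min(2\|h\|\|\xi\|,\tfrac12\|h\|^2\|\xi\|^2)$ together with dominated convergence is a cleaner version of the same estimate. One minor difference: the paper states the remainder estimate only for $u\in\mathbb{S}_d^{++}$ and then appeals to dominated convergence for the continuous extension of $DR$ to $\mathbb{S}_d^+$, leaving Fr\'echet differentiability at boundary points implicit, whereas you spell this step out explicitly (and correctly) using convexity of the cone to guarantee $\langle u_0+sh,\xi\rangle\ge 0$ along the segment.
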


\begin{proof} (a) Let $u\in\mathbb{S}_{d}^{++}$.
Consider $h\in\mathbb{S}_{d}$ with sufficiently small $\|h\|$ such
that $u+h\in\mathbb{S}_{d}^{+}$. An easy calculation shows that
\[
R(u+h)-R(u)=DR(u)(h)+r(u,h),
\]
where
\[
r(u,h):=-2h\alpha h+\int_{\mathbb{S}_{d}^{+}\backslash\lbrace0\rbrace}\mathrm{e}^{-\langle u,\xi\rangle}\left(1-\mathrm{e}^{-\langle h,\xi\rangle}-\langle h,\xi\rangle\right)\mu(\mathrm{d}\xi).
\]
Let us prove that $\lim_{0\not=\Vert h\Vert\to0}\Vert r(u,h)\Vert/\Vert h\Vert=0$.
Assume $\Vert h\Vert\not=0$. First, note that
\[
\frac{\Vert2h\alpha h\Vert}{\Vert h\Vert}\leq2\Vert\alpha\Vert\frac{\Vert h\Vert^{2}}{\Vert h\Vert}\leq2\Vert\alpha\Vert\Vert h\Vert.
\]
Let $M>0$. For $\Vert\xi\Vert\leq M$, we have
\begin{align}
\left|\mathrm{e}^{-\langle u,\xi\rangle}\left(1-\mathrm{e}^{-\langle h,\xi\rangle}-\langle h,\xi\rangle\right)\right|
 & =\left|\langle h,\xi\rangle\left(\int_{0}^{1}\mathrm{e}^{-\langle u+sh,\xi\rangle}\mathrm{d}s-\mathrm{e}^{-\langle u,\xi\rangle}\right)\right|\nonumber \\
 & =\left|\langle h,\xi\rangle\right|\cdot\left|\int_{0}^{1}\left(\mathrm{e}^{-\langle u+sh,\xi\rangle}-\mathrm{e}^{-\langle u,\xi\rangle}\right)\mathrm{d}s\right|\nonumber \\
 & \le\left|\langle h,\xi\rangle\right|^{2},\label{eq: esti for small xi}
\end{align}
where we used that $\langle u+sh,\xi\rangle\geq0$ and the Lipschitz continuity of $[0,\infty)\in x\mapsto\exp(-x)$
to get the last inequality. Similarly, for $\Vert\xi\Vert>M$,
\begin{equation}
\left|\mathrm{e}^{-\langle u,\xi\rangle}\left(1-\mathrm{e}^{-\langle h,\xi\rangle}-\langle h,\xi\rangle\right)\right|
\leq \left|\mathrm{e}^{-\langle u,\xi\rangle}-\mathrm{e}^{-\langle u+h,\xi\rangle}\right|+\left|\mathrm{e}^{-\langle u,\xi\rangle}\langle h,\xi\rangle\right|
\leq 2\left|\langle h,\xi\rangle\right|.\label{eq: esti for big xi}
\end{equation}
Combining \eqref{eq: esti for small xi}, \eqref{eq: esti for big xi}
and applying Lemma \ref{lem: triangle ineq.}, we get
\begin{align*}
\frac{1}{\Vert h\Vert} & \left\Vert \int_{\mathbb{S}_{d}^{+}\backslash\lbrace0\rbrace}\mathrm{e}^{-\langle u,\xi\rangle}\left(1-\mathrm{e}^{-\langle h,\xi\rangle}-\langle h,\xi\rangle\right)\mu(\mathrm{d}\xi)\right\Vert \\
 & \quad\leq\frac{1}{\Vert h\Vert}\int_{\mathbb{S}_{d}^{+}\backslash\lbrace0\rbrace}\left\vert \mathrm{e}^{-\langle u,\xi\rangle}\left(1-\mathrm{e}^{-\langle h,\xi\rangle}-\langle h,\xi\rangle\right)\right\vert \mathrm{tr}(\mu)(\mathrm{d}\xi)\\
 & \quad\leq\Vert h\Vert\int_{\lbrace\Vert\xi\Vert\leq M\rbrace}\Vert\xi\Vert^{2}\mathrm{tr}(\mu)(\mathrm{d}\xi)+2\int_{\lbrace\Vert\xi\Vert>M\rbrace}\Vert\xi\Vert\mathrm{tr}(\mu)(\mathrm{d}\xi),
\end{align*}
So
\[
\frac{\Vert r(u,h)\Vert}{\Vert h\Vert}\leq\left(2\Vert\alpha\Vert+\int_{\lbrace\Vert\xi\Vert\leq M\rbrace}\Vert\xi\Vert^{2}\mathrm{tr}(\mu)(\mathrm{d}\xi)\right)\Vert h\Vert+2\int_{\lbrace\Vert\xi\Vert>M\rbrace}\Vert\xi\Vert\mathrm{tr}(\mu)(\mathrm{d}\xi).
\]
Note that $\int_{\mathbb{S}_{d}^{+}\backslash\lbrace0\rbrace}\Vert\xi\Vert\mathrm{tr}(\mu)(\mathrm{d}\xi)<\infty$
by virtue of Definition \ref{def:admissible parameters} (iv). Let
$\varepsilon>0$ be arbitrary and fix some $M=M(\varepsilon)>0$ large
enough so that $\int_{\lbrace\Vert\xi\Vert>M\rbrace}\Vert\xi\Vert\mathrm{tr}(\mu)(\mathrm{d}\xi)<\varepsilon/4$.
Define
\[
\delta=\delta(\varepsilon):=\left(1+2\Vert\alpha\Vert+\int_{\lbrace\Vert\xi\Vert\leq M\rbrace}\Vert\xi\Vert^{2}\mathrm{tr}(\mu)(\mathrm{d}\xi)\right)^{-1}\frac{\varepsilon}{2}.
\]
Then, for $\Vert h\Vert\leq\delta$, we see that
\[
\frac{\Vert r(u,h)\Vert}{\Vert h\Vert}\leq\left(2\Vert\alpha\Vert+\int_{\lbrace\Vert\xi\Vert\leq M\rbrace}\Vert\xi\Vert^{2}\mathrm{tr}(\mu)(\mathrm{d}\xi)\right)\delta+\frac{\varepsilon}{2}\leq\varepsilon.
\]
This proves \eqref{eq: DR} for $u\in\mathbb{S}_{d}^{++}$. Finally,
the continuity of $u\mapsto$ $DR(u)$ in $\mathbb{S}_{d}^{+}$ can
be easily obtained from the dominated convergence theorem.

(b) Similarly as before, we derive $F(u+h)-F(u)=DF(u)(h)+r(u,h)$
with $r(u,h):=\int_{\mathbb{S}_{d}^{+}\backslash\lbrace0\rbrace}\exp(-\langle u,\xi\rangle)(1-\exp(\langle h,\xi\rangle)-\langle h,\xi\rangle)m(\mathrm{d}\xi)$.
Let $\Vert h\Vert\not=0$. By essentially the same reasoning as in
(a), we obtain that
\[
\frac{\Vert r(u,h)\Vert}{\Vert h\Vert}\leq\Vert h\Vert\int_{\lbrace\Vert\xi\Vert\leq M\rbrace}\Vert\xi\Vert^{2}m(\mathrm{d}\xi)+2\int_{\lbrace\Vert\xi\Vert>M\rbrace}\Vert\xi\Vert m(\mathrm{d}\xi),
\]
and the second integral on the right-hand side is now finite by \eqref{eq:first moment}.
Hence, we may follow the same steps as in (a) to see that $\Vert r(u,h)\Vert/\Vert h\Vert\to0$
as $\Vert h\Vert\to0$ and the continuity of $DF(u)$ in $\mathbb{S}_{d}^{+}$.
\end{proof}

Let $\phi$ and $\psi$ be as in Theorem \ref{thm:characterization theorem}.
We know from \cite[Lemma 3.2 (iii)]{MR2807963} that $\phi(t,u)$
and $\psi(t,u)$ are jointly continuous on $\mathbb{R}_{\geq0}\times\mathbb{S}_{d}^{+}$
and, moreover, $u\mapsto\phi(t,u)$ and $u\mapsto\psi(t,u)$ are analytic
on $\mathbb{S}_{d}^{++}$ for $t\geq0$.

\begin{prop}\label{prop:space-differentiability of phi and psi}
The following statements hold:
\begin{enumerate}
\item[(a)] $D\psi$ has a jointly continuous extension on $\mathbb{R}_{\geq0}\times\mathbb{S}_{d}^{+}$.
\item[(b)] If \eqref{eq:first moment} is satisfied, then $D\phi$ has a jointly
continuous extension on $\mathbb{R}_{\geq0}\times\mathbb{S}_{d}^{+}$.
\end{enumerate}
\end{prop}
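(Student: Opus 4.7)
The plan is to differentiate the Riccati equations (\ref{eq:riccati equation for phi})--(\ref{eq:riccati equation for psi}) in the spatial variable $u$ on the open cone $\mathbb{S}_d^{++}$, where the analyticity of $\psi(t,\cdot)$ and $\phi(t,\cdot)$ established in \cite[Lemma 3.2]{MR2807963} makes this rigorous, thereby producing a linear variational equation whose coefficient is already defined and jointly continuous on $\mathbb{R}_{\geq0}\times\mathbb{S}_d^+$. Solving this variational equation on the full domain will then yield the required extension.

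For part (a), differentiating (\ref{eq:riccati equation for psi}) in $u$ on $\mathbb{S}_d^{++}$ gives
\[
\partial_t D\psi(t,u)=DR\bigl(\psi(t,u)\bigr)\circ D\psi(t,u),\qquad D\psi(0,u)=\mathrm{id}_{\mathbb{S}_d}.
\]
By Lemma \ref{lem:space-differentiability of F and R}(a), $DR$ is continuous on all of $\mathbb{S}_d^+$, and by \cite[Lemma 3.2 (iii)]{MR2807963}, $\psi$ is jointly continuous on $\mathbb{R}_{\geq0}\times\mathbb{S}_d^+$; hence $(s,u)\mapsto DR(\psi(s,u))$ is jointly continuous there. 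I would then define $J(t,u)\in L(\mathbb{S}_d,\mathbb{S}_d)$ on all of $\mathbb{R}_{\geq0}\times\mathbb{S}_d^+$ as the unique solution of the linear ODE
\[
\partial_t J(t,u)=DR\bigl(\psi(t,u)\bigr)\circ J(t,u),\qquad J(0,u)=\mathrm{id}_{\mathbb{S}_d},
\]
whose well-posedness follows from standard Picard iteration in the Banach space $L(\mathbb{S}_d,\mathbb{S}_d)$. Joint continuity of $J$ on the closed cone is then a Gr\"onwall argument: for $u_1,u_2$ in a compact subset $K\subset\mathbb{S}_d^+$ and $t$ in a bounded interval, subtracting the integral forms of the two ODEs yields an estimate of the form
\[
\|J(t,u_1)-J(t,u_2)\|\le C\!\int_0^t\!\bigl\|DR(\psi(s,u_1))-DR(\psi(s,u_2))\bigr\|\,\mathrm{d}s+C\!\int_0^t\!\|J(s,u_1)-J(s,u_2)\|\,\mathrm{d}s,
\]
so Gr\"onwall combined with joint continuity of $DR\circ\psi$ gives continuity in $u$, while continuity in $t$ is immediate from the ODE. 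Uniqueness applied on $\mathbb{S}_d^{++}$ forces $J=D\psi$ there, so $J$ is the desired continuous extension.

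For part (b), from (\ref{eq:riccati equation for phi}) we have $\phi(t,u)=\int_0^t F(\psi(s,u))\,\mathrm{d}s$, and under \eqref{eq:first moment}, Lemma \ref{lem:space-differentiability of F and R}(b) gives $DF\in C(\mathbb{S}_d^+)$. On $\mathbb{S}_d^{++}$, the chain rule together with a dominated-convergence justification of Leibniz's rule (local boundedness of $DF\circ\psi$ and of $D\psi$ suffices) produces
\[
D\phi(t,u)(h)=\int_0^t DF\bigl(\psi(s,u)\bigr)\bigl(D\psi(s,u)(h)\bigr)\,\mathrm{d}s,\qquad h\in\mathbb{S}_d.
\]
Replacing $D\psi$ by its continuous extension $J$ from part (a), the right-hand side makes sense and is jointly continuous on $\mathbb{R}_{\geq0}\times\mathbb{S}_d^+$, giving the required extension of $D\phi$. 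The principal technical subtlety throughout is that $R$ and $F$ are only guaranteed $C^1$, not $C^2$, so $DR\circ\psi$ is merely jointly continuous in $(s,u)$ rather than locally Lipschitz in $u$; however, the linearity of the variational equation is precisely what makes a Gr\"onwall argument sufficient, without invoking the usual Picard--Lindel\"of local-Lipschitz machinery.
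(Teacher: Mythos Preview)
Your proposal is correct and follows essentially the same route as the paper: define the extension of $D\psi$ as the solution of the linear variational ODE $\partial_t J=DR(\psi)\circ J$, $J(0,\cdot)=\mathrm{id}$, on all of $\mathbb{R}_{\geq0}\times\mathbb{S}_d^+$, then use a Gr\"onwall argument (first for a uniform bound, then for continuity in $u$) together with joint continuity of $DR\circ\psi$; part (b) is handled identically via $\phi(t,u)=\int_0^t F(\psi(s,u))\,\mathrm{d}s$, the chain rule, and dominated convergence. Your closing remark that linearity of the variational equation is what makes mere continuity of $DR\circ\psi$ sufficient is exactly the point.
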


\begin{proof} (a) Noting that $s\mapsto DR(\psi(s,u))\in L(\mathbb{S}_{d},\mathbb{S}_{d})$
is continuous, we may define $f_{u}(t)$ as the unique solution in
$L(\mathbb{S}_{d},\mathbb{S}_{d})$ to
\[
f_{u}(t)=\mathbbm{1}+\int_{0}^{t}DR\left(\psi(s,u)\right)f_{u}(s)\mathrm{d}s.
\]
Further, we then define the extension of $D\psi$ onto $\mathbb{R}_{\geq0}\times\partial\mathbb{S}_{d}^{+}$
simply by
\[
D\psi(t,u)=f_{u}(t),\quad(t,u)\in\mathbb{R}_{\geq0}\times\partial\mathbb{S}_{d}^{+}.
\]

It remains to verify the joint continuity of $D\psi(t,u)$ on $\mathbb{R}_{\geq0}\times\mathbb{S}_{d}^{+}$
extended in this way. By the Riccati differential equation \eqref{eq:riccati equation for psi}
we have
\[
D\psi(t,u)=\mathbbm{1}+\int_{0}^{t}DR\left(\psi(s,u)\right)D\psi(s,u)\mathrm{d}s,\quad t\geq0,\thinspace u\in\mathbb{S}_{d}^{+}.
\]
Using that $u\mapsto R(u)$ is continuous on $\mathbb{S}_{d}^{+}$
and $\psi$ is jointly continuous on $\mathbb{R}_{\geq0}\times\mathbb{S}_{d}^{+}$,
for all $T>0$ and $M>0$, there exists a constant $C(T,M)>0$ such
that
\[
\sup_{s\in[0,T],\thinspace u\in\mathbb{S}_{d}^{+},\thinspace\Vert u\Vert\leq M}\left\Vert DR\left(\psi(s,u)\right)\right\Vert =:C(T,M)<\infty.
\]
Hence, for each $u\in\mathbb{S}_{d}^{+}$ with $\Vert u\Vert\leq M$,
we obtain
\[
\left\Vert D\psi(t,u)\right\Vert \leq1+C(T,M)\int_{0}^{t}\Vert D\psi(s,u)\Vert\mathrm{d}s.
\]
Applying Gronwall's inequality yields
\[
\left\Vert D\psi(t,u)\right\Vert \leq\mathrm{e}^{C(T,M)T}=:K(T,M)<\infty,
\]
for all $t\in[0,T]$ and $u\in\mathbb{S}_{d}^{+}$ with $\Vert u\Vert\leq M$.
Because $D\psi$ is jointly continuous in $\mathbb{R}_{\geq0}\times\mathbb{S}_{d}^{++}$,
it is enough to prove continuity at some fixed point $(t,u)\in\mathbb{R}_{\geq0}\times\partial\mathbb{S}_{d}^{+}$,
where $\partial\mathbb{S}_{d}^{+}:=\mathbb{S}_{d}^{+}\backslash\mathbb{S}_{d}^{+}$.

Without loss of generality we assume $t\in[0,T]$ and $u\in\partial\mathbb{S}_{d}^{+}$
with $\|u\|\le M$. Let $s\in\mathbb{R}_{\geq0}$ and $v\in\mathbb{S}_{d}^{+}$
with $s\in[0,T]$ and $\Vert v\Vert\leq M$. We have
\begin{equation}
\left\Vert D\psi(t,u)-D\psi(s,v)\right\Vert \leq\left\Vert D\psi(t,u)-D\psi(s,u)\right\Vert +\left\Vert D\psi(s,u)-D\psi(s,v)\right\Vert .\label{eq:joint continuity 01}
\end{equation}
We estimate the first term on the right-hand side of \eqref{eq:joint continuity 01}
by
\begin{align}
\left\Vert D\psi(t,u)-D\psi(s,u)\right\Vert  & \leq\left\Vert \int_{0}^{t}DR\left(\psi(r,u)\right)D\psi(r,u)\mathrm{d}r-\int_{0}^{s}DR\left(\psi(r,u)\right)D\psi(r,u)\mathrm{d}r\right\Vert \nonumber \\
 & \leq C(T,M)\int_{[s,t]\cup[t,s]}\left\Vert D\psi(r,u)\right\Vert \mathrm{d}r\nonumber \\
 & \leq C(T,M)K(T,M)\vert t-s\vert.\label{eq:joint continuity 02}
\end{align}
Turning to the second term, for $v\in\mathbb{S}_{d}^{++}$ with $\Vert v\Vert\leq M$,
$D\psi(s,u)=f_{u}(s)$, and $D\psi(r,u)=f_{u}(r)$, we obtain
\begin{align*}
\left\Vert D\psi(s,u)-D\psi(s,v)\right\Vert  & \leq\int_{0}^{s}\left\Vert DR\left(\psi(r,u)\right)D\psi(r,u)-DR\left(\psi(r,v)\right)D\psi(r,v)\right\Vert \mathrm{d}r\\
 & \leq\int_{0}^{s}\left\Vert DR(\psi(r,u))-DR\left(\psi(r,v)\right)\right\Vert \left\Vert D\psi(r,v)\right\Vert \mathrm{d}r\\
 & \quad+\int_{0}^{s}\left\Vert DR\left(\psi(r,u)\right)\right\Vert \left\Vert D\psi(r,u)-D\psi(r,v)\right\Vert \mathrm{d}r\\
 & \leq K(T,M)\int_{0}^{T}\left\Vert DR(\psi(r,u))-DR\left(\psi(r,v)\right)\right\Vert \mathrm{d}r\\
 & \quad+C(T,M)\int_{0}^{s}\left\Vert D\psi(r,u)-D\psi(r,v)\right\Vert \mathrm{d}r\\
 & =K(T,M)a_{T}(v,u)+C(T,M)\int_{0}^{s}\left\Vert D\psi(r,u)-D\psi(r,v)\right\Vert \mathrm{d}r,
\end{align*}
where $a_{T}(v,u):=\int_{0}^{T}\Vert DR(\psi(r,u))-DR(\psi(r,v))\Vert\mathrm{d}r$.
Using once again Gronwall's inequality, we deduce
\begin{equation}
\left\Vert D\psi(s,u)-D\psi(s,v)\right\Vert \leq K(T,M)a_{T}(v,u)\mathrm{e}^{C(T,M)T}.\label{eq:joint continuity 03}
\end{equation}
Noting that $R\in C^{1}(\mathbb{S}_{d}^{+})$ and $\psi(r,0)=0$ by
\cite[Remark 2.5]{MR2807963}, by dominated convergence theorem, we
see that $a_{T}(v,u)$ tends to zero as $v\to u$. Consequently, the
right-hand side of \eqref{eq:joint continuity 03} tends to zero as
$v\to u$. Combining \eqref{eq:joint continuity 01} with \eqref{eq:joint continuity 02}
and \eqref{eq:joint continuity 03}, we conclude that $D\psi$ extended
in this way is jointly continuous in $(t,u)\in\mathbb{R}_{\geq0}\times\mathbb{S}_{d}^{+}$.

(b) We know from the generalized Riccati equation \eqref{eq:riccati equation for phi}
that $\phi(t,u)=\int_{0}^{t}F(\psi(s,u))\mathrm{d}s$. Noting that
$F\in C^{1}(\mathbb{S}_{d}^{+})$ due to \eqref{eq:first moment},
the chain rule combined with the dominated convergence theorem implies
the assertion. \end{proof}

We are ready to prove Theorem \ref{thm: first moment}.

\begin{proof}[Proof of Theorem \ref{thm: first moment}] Let
$\varepsilon>0$. We have
\begin{align*}
\frac{\partial}{\partial\varepsilon}\int_{\mathbb{S}_{d}^{+}}\mathrm{e}^{-\langle\varepsilon u,\xi\rangle}p_{t}(x,\mathrm{d}\xi) & =\frac{\partial}{\partial\varepsilon}\mathrm{e}^{-\phi(t,\varepsilon u)-\langle x,\psi(t,\varepsilon u)\rangle}\\
 & =-\left(D\phi(t,\varepsilon u)(u)+\langle x,D\psi(t,\varepsilon u)(u)\rangle\right)\mathrm{e}^{-\phi(t,\varepsilon u)-\langle x,\psi(t,\varepsilon u)\rangle}\\
 & \to-\left(D\phi(t,0)(u)+\langle x,D\psi(t,0)(u)\rangle\right)\quad\text{as }\varepsilon\to0,
\end{align*}
where we used that the functions $D\phi$ and $D\psi$ have a jointly
continuous extension on $\mathbb{R}_{\geq0}\times\mathbb{S}_{d}^{+}$
in accordance with Proposition \ref{prop:space-differentiability of phi and psi}.
On the other hand, noting $\left|\langle u,\xi\rangle\exp(-\langle\varepsilon u,\xi\rangle)\right|\le\varepsilon^{-1}\mathrm{e}^{-1}$
and applying dominated convergence theorem, we get
\[
\frac{\partial}{\partial\varepsilon}\int_{\mathbb{S}_{d}^{+}}\mathrm{e}^{-\langle\varepsilon u,\xi\rangle}p_{t}(x,\mathrm{d}\xi)=-\int_{\mathbb{S}_{d}^{+}}\langle u,\xi\rangle\mathrm{e}^{-\langle\varepsilon u,\xi\rangle}p_{t}(x,\mathrm{d}\xi)\to-\int_{\mathbb{S}_{d}^{+}}\langle u,\xi\rangle p_{t}(x,\mathrm{d}\xi)\quad\text{as }\varepsilon\to0.
\]
Note that the limit on the right-hand side is finite. Indeed, using
Fatou's lemma, we obtain
\[
\int_{\mathbb{S}_{d}^{+}}\langle u,\xi\rangle p_{t}(x,\mathrm{d}\xi)\leq\liminf_{\varepsilon\to0}\int_{\mathbb{S}_{d}^{+}}\langle u,\xi\rangle\mathrm{e}^{-\langle\varepsilon u,\xi\rangle}p_{t}(x,\mathrm{d}\xi)=D\phi(t,0)(u)+\langle x,D\psi(t,0)(u)\rangle<\infty
\]
for all $u\in\mathbb{S}_{d}^{+}$. So
\begin{equation}
\int_{\mathbb{S}_{d}^{+}}\langle u,\xi\rangle p_{t}(x,\mathrm{d}\xi)=D\phi(t,0)(u)+\langle x,D\psi(t,0)(u)\rangle.\label{eq:first moment eq1}
\end{equation}

In what follows, we compute the derivatives $D\phi(t,0)$ and $D\psi(t,0)$
explicitly. By means of the generalized Riccati equation \eqref{eq:riccati equation for psi},
we have
\[
\psi(t,u)-u=\int_{0}^{t}R\left(\psi(s,u)\right)ds,\quad t\geq0,\thinspace u\in\mathbb{S}_{d}^{+}.
\]
According to Lemma \ref{lem:space-differentiability of F and R} and
Proposition \ref{prop:space-differentiability of phi and psi} we
are allowed to differentiate both sides of the latter equation with
respect to $u\in\mathbb{S}_{d}^{+}$ and evaluate at $u=0$, thus,
using the dominated convergence theorem,
\[
\left.D\psi(t,u)\right|_{u=0}-\mathrm{Id}=\int_{0}^{t}\left.DR\left(\psi(s,u)\right)D\psi(s,u)\right\vert _{u=0}\mathrm{d}s,\quad t\geq0,
\]
where $\mathrm{Id}$ denotes the identity map on $\mathbb{S}_{d}^{+}$.
From \cite[Lemma 3.2 (iii)]{MR2807963} we know that $\psi(t,u)$
is continuous in $\mathbb{R}_{\geq0}\times\mathbb{S}_{d}^{+}$ and
noting that $\psi(s,0)=0$ (see \cite[Remark 2.5]{MR2807963}), we
get
\[
D\psi(t,0)-\mathrm{Id}=\int_{0}^{t}DR\left(0\right)D\psi(s,0)\mathrm{d}s,\quad t\geq0.
\]
From this and the precise formula for $\phi(t,h)$ we deduce that
\[
D\psi(t,0)=\mathrm{e}^{tDR(0)}\quad\text{and}\quad\quad D\phi(t,0)=\int_{0}^{t}DF(0)\mathrm{e}^{sDR(0)}\mathrm{d}s.
\]
We use Lemma \ref{lem:space-differentiability of F and R} to get
that
\[
DR(0)(u)=\widetilde{B}^{\top}(u)\quad\text{and}\quad DF(0)(u)=\langle b+\int_{\mathbb{S}_{d}^{+}\backslash\lbrace0\rbrace}\xi m(\mathrm{d}\xi),u\rangle.
\]
Finally, combining this with \eqref{eq:first moment eq1} yields
\begin{align*}
\int_{\mathbb{S}_{d}^{+}}\langle u,\xi\rangle p_{t}(x,\mathrm{d}\xi) & =\int_{0}^{t}\left(DF(0)\right)\mathrm{e}^{sDR(0)}(u)\mathrm{d}s+\langle x,\mathrm{e}^{tDR(0)}(u)\rangle\\
 & =\int_{0}^{t}\langle\mathrm{e}^{s\widetilde{B}}\left(b+\int_{\mathbb{S}_{d}^{+}\backslash\lbrace0\rbrace}\xi m(\mathrm{d}\xi)\right),u\rangle\mathrm{d}s+\langle\mathrm{e}^{t\widetilde{B}}x,u\rangle.
\end{align*}
Since the equality holds for each $u\in\mathbb{S}_{d}^{+}$, the assertion
is proved. \end{proof}

\section{Estimates on $\psi(t,u)$}

\label{sec:estimates on psi}

We fix an admissible parameter set $(\alpha,b,B,m,\mu)$ and let $\psi$
be the unique solution to \eqref{eq:riccati equation for psi}. In
this section we study upper and lower bounds for $\psi$. Let us start
with an upper bound for $\psi(t,u)$.

\begin{prop}\label{prop:upper bound for psi} Let $\psi$ be the
unique solution to \eqref{eq:riccati equation for psi}. Then
\begin{equation}
\left\Vert \psi\left(t,u\right)\right\Vert \leq M\Vert u\Vert\mathrm{e}^{-t\delta},\quad t\geq0,\label{eq:exponential convergence of psi}
\end{equation}
where $M$ and $\delta$ are given by \eqref{subcritical}. \end{prop}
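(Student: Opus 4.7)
The strategy is to dominate $\psi(t,u)$ in the partial order $\preceq$ by the solution of the Riccati equation \emph{linearized at the origin}, and then to pass to the Frobenius norm via subcriticality. The starting point is the decomposition
\[
R(v)=\widetilde{B}^{\top}(v)-G(v),\qquad G(v):=2v\alpha v+\int_{\mathbb{S}_{d}^{+}\setminus\{0\}}\bigl(\langle v,\xi\rangle-1+\mathrm{e}^{-\langle v,\xi\rangle}\bigr)\mu(\mathrm{d}\xi),
\]
valid for every $v\in\mathbb{S}_{d}^{+}$, obtained by writing $1-\mathrm{e}^{-\langle v,\xi\rangle}=\langle v,\xi\rangle-(\langle v,\xi\rangle-1+\mathrm{e}^{-\langle v,\xi\rangle})$ and using the identification $\widetilde{B}^{\top}(v)=B^{\top}(v)+\int\langle v,\xi\rangle\mu(\mathrm{d}\xi)$ established in the proof of Theorem~\ref{thm: first moment}. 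Each ingredient of $G(v)$ sits in $\mathbb{S}_{d}^{+}$ when $v\in\mathbb{S}_{d}^{+}$: $v\alpha v\succeq 0$ since $\alpha\in\mathbb{S}_{d}^{+}$; the scalar $x-1+\mathrm{e}^{-x}\ge 0$ for $x\ge 0$; and the resulting nonnegative scalar is integrated against the $\mathbb{S}_{d}^{+}$-valued measure $\mu$.

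Applying Duhamel's formula to the Riccati equation \eqref{eq:riccati equation for psi}, recast as $\partial_t\psi(t,u)=\widetilde{B}^{\top}(\psi(t,u))-G(\psi(t,u))$, gives
\[
\psi(t,u)=\mathrm{e}^{t\widetilde{B}^{\top}}u-\int_{0}^{t}\mathrm{e}^{(t-s)\widetilde{B}^{\top}}\bigl(G(\psi(s,u))\bigr)\,\mathrm{d}s.
\]
Next I would show that $(\mathrm{e}^{t\widetilde{B}^{\top}})_{t\ge 0}$ leaves the cone $\mathbb{S}_{d}^{+}$ invariant by verifying the quasi-monotonicity condition for $\widetilde{B}^{\top}$: for $x,u\in\mathbb{S}_{d}^{+}$ with $\langle x,u\rangle=0$,
\[
\langle\widetilde{B}^{\top}(x),u\rangle=\langle x,B(u)\rangle+\int_{\mathbb{S}_{d}^{+}\setminus\{0\}}\langle\xi,u\rangle\,\langle x,\mu(\mathrm{d}\xi)\rangle\ge 0,
\]
where the first term is nonnegative by Definition~\ref{def:admissible parameters}(v) applied with the roles of $x$ and $u$ swapped, and the integrand in the second term is nonnegative because $\langle\xi,u\rangle\ge 0$ and $\mu$ is $\mathbb{S}_{d}^{+}$-valued. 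Since $\psi(s,u)\in\mathbb{S}_{d}^{+}$ (a consequence of the Laplace representation \eqref{eq:affine representation}) and $G(\psi(s,u))\succeq 0$, the integrand in the Duhamel formula lies in $\mathbb{S}_{d}^{+}$, so the integral itself is in $\mathbb{S}_{d}^{+}$, and hence
\[
0\preceq\psi(t,u)\preceq\mathrm{e}^{t\widetilde{B}^{\top}}u.
\]

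For the final step I invoke the elementary inequality $0\preceq X\preceq Y\Rightarrow\|X\|\le\|Y\|$, which follows from $\|Y\|^{2}-\|X\|^{2}=\mathrm{tr}\bigl((Y-X)(Y+X)\bigr)\ge 0$ since the trace of a product of two PSD matrices is nonnegative (cf.\ Appendix~\ref{sec:matrix calculus}). Combined with the Hilbert-space identity $\|\mathrm{e}^{t\widetilde{B}^{\top}}\|=\|\mathrm{e}^{t\widetilde{B}}\|$ on $(\mathbb{S}_{d},\|\cdot\|)$ and the subcriticality estimate~\eqref{subcritical}, this yields
\[
\|\psi(t,u)\|\le\|\mathrm{e}^{t\widetilde{B}^{\top}}u\|\le\|\mathrm{e}^{t\widetilde{B}}\|\,\|u\|\le M\|u\|\mathrm{e}^{-\delta t},
\]
which is \eqref{eq:exponential convergence of psi}. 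The step requiring care is the cone invariance of $\mathrm{e}^{t\widetilde{B}^{\top}}$, and hence the PSD comparison $\psi(t,u)\preceq\mathrm{e}^{t\widetilde{B}^{\top}}u$; once this is established through the quasi-monotonicity check above, the remainder is routine.
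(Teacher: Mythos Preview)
Your proof is correct and reaches the same key intermediate inequality as the paper, namely $\psi(t,u)\preceq\mathrm{e}^{t\widetilde{B}^{\top}}u$ in the cone order, but the route is genuinely different. The paper argues probabilistically: it introduces the auxiliary affine kernel $q_{t}$ with $m=0$, applies Jensen's inequality to its Laplace transform together with the first-moment formula of Theorem~\ref{thm: first moment}, and then---because the admissibility constraint forces $b\succeq(d-1)\alpha$---removes the unwanted $b$-terms by a separate contradiction argument (Step~2) before specializing to $b=0$. Your argument bypasses all of this: you split $R=\widetilde{B}^{\top}-G$ with $G$ manifestly $\mathbb{S}_{d}^{+}$-valued, invoke Duhamel's formula, and reduce everything to the cone invariance of $\mathrm{e}^{t\widetilde{B}^{\top}}$, which you verify via the cross-positivity (quasi-monotonicity) criterion. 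This is more direct and fully analytic; it neither appeals to Theorem~\ref{thm: first moment} nor needs the detour through admissible $b$. The paper's approach, on the other hand, makes the convexity origin of the bound transparent and illustrates a general principle (Jensen on the Laplace transform dominates the nonlinear characteristic by the linear one), at the cost of the somewhat ad hoc Step~2. One small remark: in your quasi-monotonicity check it is cleanest to work directly with $DR(0)(v)=B^{\top}(v)+\int\langle v,\xi\rangle\mu(\mathrm{d}\xi)$ rather than passing through the adjoint of $\widetilde{B}$, so that the nonnegative term reads $\int\langle x,\xi\rangle\langle u,\mu(\mathrm{d}\xi)\rangle$; the conclusion is of course the same.
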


\begin{proof} The proof is divided into three steps.

\textit{Step 1}: Denote by $q_{t}(x,\mathrm{d}\xi)$ the unique transition
kernel of an affine process on $\mathbb{S}_{d}^{+}$ with admissible
parameters $(\alpha,b,B,m=0,\mu)$, that is, for each $u,\thinspace x\in\mathbb{S}_{d}^{+}$,
we have
\begin{align}
\int_{\mathbb{S}_{d}^{+}}\mathrm{e}^{-\langle u,\xi\rangle}q_{t}(x,\mathrm{d}\xi)=\exp\left(-\int_{0}^{t}\langle b,\psi(s,u)\rangle\mathrm{d}s-\langle x,\psi(t,u)\rangle\right),\quad t\geq0.\label{eq: q laplace transform}
\end{align}
Applying Jensen's inequality
to the convex function $t\mapsto\exp(-t)$ yields
\begin{align*}
\int_{\mathbb{S}_{d}^{+}}\mathrm{e}^{-\langle u,\xi\rangle}q_{t}(x,\mathrm{d}\xi) & \geq\exp\left(-\int_{\mathbb{S}_{d}^{+}}\langle u,\xi\rangle q_{t}(x,\mathrm{d}\xi)\right)\\
 & =\exp\left(-\int_{0}^{t}\langle\mathrm{e}^{s\widetilde{B}}b,u\rangle\mathrm{d}s-\langle\mathrm{e}^{t\widetilde{B}}x,u\rangle\right),
\end{align*}
where the last identity is a special case of Theorem \ref{thm: first moment}.
Using \eqref{eq: q laplace transform} we obtain
\begin{equation}
\langle x,\psi(t,u)\rangle+\int_{0}^{t}\langle b,\psi(t,u)\rangle\mathrm{d}s\leq\langle\mathrm{e}^{t\widetilde{B}}x,u\rangle+\int_{0}^{t}\langle\mathrm{e}^{s\widetilde{B}}b,u\rangle\mathrm{d}s,\quad\text{for all }u,\thinspace x\in\mathbb{S}_{d}^{+},\ t\ge 0.\label{eq:psi inequality 1}
\end{equation}

\textit{Step 2}: Let $\alpha \in \mathbb{S}_{d}^{+}$ be fixed.  We claim that \eqref{eq:psi inequality 1} holds
not only for $b\succeq(d-1)\alpha$ but also for any $b\in\mathbb{S}_{d}^{+}$.
Aiming for a contradiction, suppose that there exist $t_{0}>0$
and $\xi,\thinspace x_{0},\thinspace u_{0}\in\mathbb{S}_{d}^{+}$
such that
\[
I:=\langle x_{0},\psi(t_{0},u_{0})\rangle+\int_{0}^{t_{0}}\langle\xi,\psi(s,u_{0})\rangle\mathrm{d}s-\langle x_{0},\mathrm{e}^{t_{0}\widetilde{B}^{\top}}u_{0}\rangle-\int_{0}^{t_{0}}\langle\xi,\mathrm{e}^{s\widetilde{B}^{\top}}u_{0}\rangle\mathrm{d}s>0.
\]
We now take an arbitrary but fixed $b_{0}\succeq(d-1)\alpha$. Noting
that
\[
\Delta:=\int_{0}^{t_{0}}\langle b_{0},\psi(s,u_{0})\rangle\mathrm{d}s-\int_{0}^{t_{0}}\langle b_{0},\mathrm{e}^{s\widetilde{B}^{\top}}u_{0}\rangle\mathrm{d}s
\]
is finite, we find a constant $K>0$ large enough so that $KI+\Delta>0$, i.e.,
\begin{equation}
\langle Kx_{0},\psi(t_{0},u_{0})\rangle+\int_{0}^{t_{0}}\langle b_{0}+K\xi,\psi(s,u_{0})\rangle\mathrm{d}s>\langle Kx_{0},\mathrm{e}^{t_{0}\widetilde{B}^{\top}}u_{0}\rangle+\int_{0}^{t_{0}}\langle b_{0}+K\xi,\mathrm{e}^{s\widetilde{B}^{\top}}u_{0}\rangle\mathrm{d}s.\label{eq:psi inequlatity 2}
\end{equation}
Now, since $b_{0}+K\xi\succeq(d-1)\alpha$, we see that \eqref{eq:psi inequlatity 2}
contradicts \eqref{eq:psi inequality 1} if we chose $b=b_{0}+K\xi$,
$x=Kx_{0}$, $u=u_{0}$, and $t=t_{0}$. Hence \eqref{eq:psi inequality 1}
holds for all $b\in\mathbb{S}_{d}^{+}$.

\textit{Step 3}: According to Step 2, we are allowed to choose $b=0$
in \eqref{eq:psi inequality 1}, which implies
\[
\langle x,\psi(t,u)\rangle\leq\langle x,\mathrm{e}^{t\widetilde{B}^{\top}}u\rangle
\]
for all $t\geq0$ and $x,\thinspace u\in\mathbb{S}_{d}^{+}$. This
completes the proof. \end{proof}

We continue with a lower bound for $\psi(t,u)$.

\begin{prop}\label{prop:lower bound for psi} Let $\psi$ be the
unique solution to \eqref{eq:riccati equation for psi} and suppose
that $\alpha=0$ and \eqref{eq:00} is satisfied. Then, for each $u,\thinspace\xi\in\mathbb{S}_{d}^{+}$,
\begin{equation}
\langle\xi,\psi(t,u)\rangle\geq\mathrm{e}^{-Kt}\langle\xi,u\rangle,\quad t\geq0.\label{eq: lower bound psi}
\end{equation}
\end{prop}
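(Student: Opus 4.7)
The plan is to differentiate $f(t) := \langle \xi, \psi(t,u)\rangle$ in $t$, use the Riccati equation \eqref{eq:riccati equation for psi} with $\alpha=0$, and show that $f'(t) \geq -K f(t)$, from which Gronwall's inequality yields the claim.

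First I would fix $\xi, u \in \mathbb{S}_d^+$ and compute, using $\alpha = 0$,
\[
f'(t) = \langle \xi, R(\psi(t,u))\rangle = \langle \xi, B^\top(\psi(t,u))\rangle + \int_{\mathbb{S}_d^+\setminus\{0\}} \bigl(1 - \mathrm{e}^{-\langle \psi(t,u),\eta\rangle}\bigr)\, \langle \xi, \mu(\mathrm{d}\eta)\rangle,
\]
where I have used that $\langle \xi, \cdot\rangle$ commutes with the matrix-valued integral (in the sense of Lemma \ref{lem: triangle ineq.} applied componentwise) and rewritten the sign. The key observation is that $\psi(t,u) \in \mathbb{S}_d^+$ (since it is the exponent in the Laplace transform of a probability kernel on $\mathbb{S}_d^+$), so $\langle \psi(t,u), \eta\rangle \geq 0$ and hence $1 - \mathrm{e}^{-\langle \psi(t,u),\eta\rangle} \geq 0$. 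Moreover, because $\mu$ is $\mathbb{S}_d^+$-valued and $\xi \in \mathbb{S}_d^+$, the scalar measure $A \mapsto \langle \xi, \mu(A)\rangle$ is nonnegative, so the entire integral term is $\geq 0$.

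Next I would handle the drift term via the hypothesis \eqref{eq:00}. Since $K\xi + B(\xi) \in \mathbb{S}_d^+$ and $\psi(t,u) \in \mathbb{S}_d^+$, one has $\langle K\xi + B(\xi), \psi(t,u)\rangle \geq 0$, i.e.,
\[
\langle \xi, B^\top(\psi(t,u))\rangle = \langle B(\xi), \psi(t,u)\rangle \geq -K\, \langle \xi, \psi(t,u)\rangle = -K f(t).
\]
Combining the two estimates gives $f'(t) \geq -K f(t)$ for all $t \geq 0$, with $f(0) = \langle \xi, u\rangle$. Hence $\tfrac{\mathrm{d}}{\mathrm{d}t}\bigl(\mathrm{e}^{Kt} f(t)\bigr) \geq 0$, and integrating from $0$ to $t$ yields \eqref{eq: lower bound psi}.

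I expect no substantial obstacle; the main technical point is the interchange of $\langle \xi, \cdot\rangle$ with the $\mu$-integral, which is justified by the moment condition (iv) of Definition \ref{def:admissible parameters} together with $|1 - \mathrm{e}^{-\langle \psi(t,u),\eta\rangle}| \leq \|\psi(t,u)\|\, \|\eta\| \wedge 1$, so that Lemma \ref{lem: triangle ineq.} applies and the scalar expression obtained after pairing with $\xi$ coincides with the integral of $(1 - \mathrm{e}^{-\langle \psi(t,u),\eta\rangle})$ against the positive measure $\langle \xi, \mu(\cdot)\rangle$. Everything else is a direct consequence of $\alpha = 0$, the sign structure of the jump integral, and the assumption \eqref{eq:00}.
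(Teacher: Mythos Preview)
Your argument is correct and is essentially the same as the paper's: both establish the key inequality $\langle\xi,R(\psi(s,u))\rangle \geq -K\langle\xi,\psi(s,u)\rangle$ by showing the jump integral is nonnegative and bounding the drift term via \eqref{eq:00}. The only cosmetic difference is that the paper writes $W_t(u):=\psi(t,u)-\mathrm{e}^{-Kt}u$ and uses a variation-of-constants formula for $W_t(u)$ to conclude $\langle\xi,W_t(u)\rangle\geq0$, whereas you phrase it as the differential inequality $f'(t)\geq -Kf(t)$ and integrate $\mathrm{e}^{Kt}f(t)$; these are equivalent.
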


\begin{proof} Fix $u\in\mathbb{S}_{d}^{+}$ and define $W_{t}(u):=\psi(t,u)-\exp(-Kt)u$.
Using that $\exp(-Kt)u=\psi(t,u)-W_{t}(u)$ we obtain
\begin{align*}
\frac{\partial W_{t}(u)}{\partial t}=R(\psi(t,u))+K\psi(t,u)-KW_{t}(u).
\end{align*}
Since $W_{0}(u)=0$, the latter implies
\[
W_{t}(u)=\int_{0}^{t}\mathrm{e}^{-K(t-s)}\left(K\psi(s,u)+R(\psi(s,u))\right)\mathrm{d}s.
\]
Fix $\xi\in\mathbb{S}_{d}^{+}$, then
\begin{align}
\langle\xi,W_{t}(u)\rangle=\int_{0}^{t}\mathrm{e}^{-K(t-s)}\left(K\langle\xi,\psi(s,u)\rangle+\langle\xi,R(\psi(s,u))\rangle\right)\mathrm{d}s.\label{eq:02}
\end{align}
In the following we estimate the integrand. For this, we write $\langle\xi,R(\psi(s,u))\rangle=I_{1}+I_{2}$,
where
\[
I_{1}=\langle\xi,B{}^{\top}\left(\psi(s,u)\right)\rangle\quad\text{and}\quad I_{2}=-\int_{\mathbb{S}_{d}^{+}\backslash\{0\}}\left(\mathrm{e}^{-\langle\psi(s,u),\zeta\rangle}-1\right)\langle\xi,\mu\left(\mathrm{d}\zeta\right)\rangle,
\]
and estimate $I_{1}$ and $I_{2}$ separately. For $I_{1}$, by
\eqref{eq:00} we get
\[
I_{1}=\langle B(\xi),\psi(s,u)\rangle\geq-K\langle\xi,\psi(s,u)\rangle,
\]
where we used the self-duality of the cone $\mathbb{S}_d^+$ (see \cite[Theorem 7.5.4]{MR2978290}).
Turning to $I_{2}$, we simply have
\begin{align*}
I_{2} & =\int_{\mathbb{S}_{d}^{+}\backslash\{0\}}\left(1-\mathrm{e}^{-\langle\psi(s,u),\zeta\rangle}\right)\langle\xi,\mu\left(\mathrm{d}\zeta\right)\rangle\geq0.
\end{align*}
Collecting now the estimates for $I_{1}$ and $I_{2}$, we see that
\[
\left(K\langle\xi,\psi(s,u)\rangle+\langle\xi,R(\psi(s,u))\rangle\right)\geq0
\]
and, thus, $\langle\xi,W_{t}(u)\rangle\geq0$ by \eqref{eq:02} .
This proves the assertion. \end{proof}

\section{Proof of the main results}

\label{sec:proof of ergodicity}

In this section we will prove Theorem \ref{thm:existence of limit distribution},
Proposition \ref{prop:necessity for ergodicity}, and Corollary \ref{cor:first moment of pi}.
Let $p_{t}(x,\mathrm{d}\xi)$ be the transition kernel of a subcritical
affine process on $\mathbb{S}_{d}^{+}$ with admissible parameters
$(\alpha,b,B,m,\mu)$ and $\delta>0$ be given by \eqref{subcritical}.

We note that $F(u)\geq0$ for all $u\in\mathbb{S}_{d}^{+}$. Based on
the estimates on $\psi(t,u)$ that we derived in the previous section,
we easily obtain the following lemma.

\begin{lem}\label{lem:bound for F(psi(s,u))} Suppose that \eqref{LOGMOMENT}
holds. Then there exists a constant $C>0$ such that
\begin{equation}
F(\psi(s,u))\leq C\Vert u\Vert\mathrm{e}^{-s\delta},\quad s\geq0,\ \ u\in\mathbb{S}_{d}^{+}.\label{eq:bound for F}
\end{equation}
Consequently,
\begin{align}
\int_{0}^{\infty}F(\psi(s,u))\mathrm{d}s\leq\frac{C}{\delta}\|u\|,\quad u\in\mathbb{S}_{d}^{+}.\label{eq:06}
\end{align}
\end{lem}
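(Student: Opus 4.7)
The plan is to decompose $F(\psi(s,u))$ into its drift and jump contributions and control each separately, using the upper bound $\|\psi(s,u)\|\leq M\|u\|e^{-\delta s}$ from Proposition \ref{prop:upper bound for psi} together with the log-moment hypothesis \eqref{LOGMOMENT}. Concretely I would write
\[
F(\psi(s,u)) = \langle b,\psi(s,u)\rangle + \int_{\mathbb{S}_d^+\setminus\{0\}}\bigl(1 - e^{-\langle\psi(s,u),\xi\rangle}\bigr)m(d\xi),
\]
and bound each term in the required form $C\|u\|e^{-\delta s}$. The drift part is immediate: by Cauchy--Schwarz and Proposition \ref{prop:upper bound for psi}, $\langle b,\psi(s,u)\rangle\leq \|b\|\cdot M\|u\|e^{-\delta s}$.

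For the integral term, I would split the domain into the regions $\{\|\xi\|\leq 1\}$ and $\{\|\xi\|>1\}$. On the former, the elementary inequality $1-e^{-x}\leq x$, together with $\langle\psi,\xi\rangle\leq \|\psi\|\,\|\xi\|$, gives
\[
\int_{\|\xi\|\leq 1}\bigl(1-e^{-\langle\psi(s,u),\xi\rangle}\bigr)m(d\xi)\ \leq\ \|\psi(s,u)\|\int_{\|\xi\|\leq 1}\|\xi\|\,m(d\xi),
\]
and the remaining integral is finite by admissibility of $m$ (Definition \ref{def:admissible parameters}(iii)). Multiplying by Proposition \ref{prop:upper bound for psi} yields the sought exponential bound for this piece.

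The genuine difficulty lies on $\{\|\xi\|>1\}$, and this is where \eqref{LOGMOMENT} enters. My approach is to use $1-e^{-\langle\psi,\xi\rangle}\leq \min(\|\psi\|\|\xi\|,1)$ and partition $\{\|\xi\|>1\}$ by a cutoff $R=R(s)$, writing
\[
\int_{1<\|\xi\|\leq R}\|\psi(s,u)\|\,\|\xi\|\,m(d\xi) + \int_{\|\xi\|>R}m(d\xi),
\]
and then applying the Chebyshev-type estimate $m(\{\|\xi\|>R\})\leq (\log R)^{-1}\int_{\|\xi\|>R}\log\|\xi\|\,m(d\xi)$, the right-hand side of which is finite and vanishes as $R\to\infty$ by \eqref{LOGMOMENT}. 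The calibration of the cutoff against $s$ is delicate; the first summand favours a small $R$ (to keep $\|\psi\|R$ small, preserving the $e^{-\delta s}$ rate) while the second favours a large $R$ (to exploit the tail decay from \eqref{LOGMOMENT}). Balancing these is the main obstacle, and I expect one must either choose $R(s)=e^{\theta \delta s}$ with $\theta\in(0,1)$, absorbing the small loss of rate into the constant $C$, or exploit additional monotonicity of $\psi$ rather than just its norm.

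Once the pointwise estimate \eqref{eq:bound for F} is in hand, the integrated bound \eqref{eq:06} follows at once by integrating $e^{-\delta s}$ over $[0,\infty)$, which contributes the factor $1/\delta$. I therefore expect the technical heart of the proof to be the large-jump estimate, whereas the drift bound, the small-jump bound, and the integration are all routine.
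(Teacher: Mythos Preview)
Your decomposition and your treatment of the drift term $\langle b,\psi(s,u)\rangle$ and of the small-jump region $\{\|\xi\|\le 1\}$ match the paper's proof; the gap is in the large-jump estimate. The cutoff scheme you sketch cannot yield the stated bound $C\|u\|e^{-\delta s}$. With $R=R(s)$ depending only on $s$ (say $R=e^{\theta\delta s}$), the tail contribution $m(\{\|\xi\|>R\})$ carries no factor of $\|u\|$, so the inequality fails as $\|u\|\to 0$; moreover the Chebyshev bound $m(\{\|\xi\|>R\})\le(\log R)^{-1}\int_{\|\xi\|>1}\log\|\xi\|\,m(d\xi)$ decays only like $1/s$, not exponentially in $s$. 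Your phrase ``absorbing the small loss of rate into the constant $C$'' is also incorrect: there is no constant $C$ with $e^{-(1-\theta)\delta s}\le Ce^{-\delta s}$ for all $s\ge 0$ once $\theta>0$, so the exponent $\delta$ in \eqref{eq:bound for F} would genuinely be lost. Letting $R$ depend on $\|\psi(s,u)\|$ does not rescue the argument either: with $a:=\|\psi(s,u)\|$ and the natural choice $R=1/a$, the two pieces combine to at best $O\bigl(1/\log(1/a)\bigr)$, not $O(a)$.

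The paper bypasses the cutoff entirely. After $1-e^{-\langle\psi,\xi\rangle}\le 1\wedge(\|\psi\|\,\|\xi\|)$ it uses $1\wedge x\le(\log 2)^{-1}\log(1+x)$ and then invokes the pointwise inequality (quoted from \cite[Lemma~8.5]{2019arXiv190105815F})
\[
\log(1+ac)\ \le\ C\min\{\log(1+a),\log(1+c)\}+C\log(1+a)\log(1+c)\ \le\ Ca\bigl(1+\log(1+c)\bigr),
\]
applied with $a=\|u\|e^{-s\delta}$ and $c=\|\xi\|$, which gives directly
\[
\int_{\{\|\xi\|>1\}}\log\bigl(1+\|\xi\|\,\|u\|e^{-s\delta}\bigr)\,m(d\xi)\ \le\ C\|u\|e^{-s\delta}\int_{\{\|\xi\|>1\}}\bigl(1+\log(1+\|\xi\|)\bigr)\,m(d\xi),
\]
the last integral being finite by $m(\{\|\xi\|>1\})<\infty$ together with \eqref{LOGMOMENT}. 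This single product--logarithm inequality is the missing device that replaces your balancing argument; consult the cited lemma and use it in place of the $R$-splitting.
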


\begin{proof} We know that
\begin{align*}
F(\psi(s,u)) & =\langle b,\psi(s,u)\rangle+\int_{\mathbb{S}_{d}^{+}\backslash\lbrace0\rbrace}\left(1-\mathrm{e}^{-\langle\psi(s,u),\xi\rangle}\right)m(\mathrm{d}\xi)\\
 & =:\langle b,\psi(s,u)\rangle+I(u).
\end{align*}
Now, first note that, by \eqref{eq:exponential convergence of psi},
\begin{equation}
\langle b,\psi(s,u)\rangle\leq\Vert b\Vert\Vert\psi(s,u)\Vert\leq\Vert b\Vert\Vert u\Vert\mathrm{e}^{-s\delta}.\label{eq:bound for I_1}
\end{equation}
We turn to estimate $I(u)$. Using once again \eqref{eq:exponential convergence of psi},
we obtain
\begin{align*}
I(u) & =\int_{\mathbb{S}_{d}^{+}\backslash\lbrace0\rbrace}\left(1-\mathrm{e}^{-\langle\psi(s,u),\xi\rangle}\right)m(\mathrm{d}\xi)\\
 & \leq\int_{\mathbb{S}_{d}^{+}\backslash\lbrace0\rbrace}\min\left\lbrace 1,\langle\psi(s,u),\xi\rangle\right\rbrace m(\mathrm{d}\xi)\\
 & \leq\int_{\mathbb{S}_{d}^{+}\backslash\lbrace0\rbrace}\min\left\lbrace 1,\Vert\xi\Vert\Vert u\Vert\mathrm{e}^{-s\delta}\right\rbrace m(\mathrm{d}\xi).
\end{align*}
For all $a\geq0$ it holds $1\wedge a\leq\log(2)^{-1}\log(1+a)$,
hence
\begin{align*}
I(u) & \leq\frac{1}{\log(2)}\int_{\lbrace\Vert\xi\Vert\leq1\rbrace}\Vert\xi\Vert\Vert u\Vert\mathrm{e}^{-s\delta}m(\mathrm{d}\xi)+\frac{1}{\log(2)}\int_{\lbrace\Vert\xi\Vert>1\rbrace}\log\left(1+\Vert\xi\Vert\Vert u\Vert\mathrm{e}^{-s\delta}\right)m(\mathrm{d}\xi)\\
 & =:J_{1}(u)+J_{2}(u).
\end{align*}
Let $C>0$ be a generic constant which may vary from line to line.
Since $m(\mathrm{d}\xi)$ integrates $\Vert\xi\Vert\mathbbm{1}_{\lbrace\Vert\xi\Vert\leq1\rbrace}$
by definition, we have
\[
J_{1}(u)\leq C\Vert u\Vert\mathrm{e}^{-s\delta}.
\]
Moreover, noting that $m(\mathrm{d}\xi)$ integrates $\log\Vert\xi\Vert\mathbbm{1}_{\lbrace\Vert\xi\Vert>1\rbrace}$
by assumption, for $J_{2}(u)$ we use the elementary inequality (see \cite[Lemma 8.5]{2019arXiv190105815F})
\begin{align*}
\log(1+a\cdot c) & \leq C\min\left\lbrace \log(1+a),\log(1+c)\right\rbrace +C\log(1+a)\log(1+c)\\
 & \leq C\log(1+a)+Ca\log(1+c)\\
 & \leq Ca\left(1+\log(1+c)\right)
\end{align*}
for $a=\Vert u\Vert\exp(-s\delta)$ and $c=\Vert\xi\Vert$ to get
\[
J_{2}(u)\leq C\Vert u\Vert\mathrm{e}^{-s\delta}\int_{\lbrace\Vert\xi\Vert>1\rbrace}\left(1+\log\left(1+\Vert\xi\Vert\right)\right)m(\mathrm{d}\xi)\leq C\Vert u\Vert\mathrm{e}^{-s\delta}.
\]
Combining the estimates for $J_{1}(u)$ and $J_{2}(u)$ yields
\begin{equation}
I(u)=J_{1}(u)+J_{2}(u)\leq C\Vert u\Vert\mathrm{e}^{-s\delta}.\label{eq:bound for I_2}
\end{equation}
So, by \eqref{eq:bound for I_1} and \eqref{eq:bound for I_2}, we
have \eqref{eq:bound for F} which proves the assertion. \end{proof}

We are now able to prove Theorem \ref{thm:existence of limit distribution}.

\begin{proof}[Proof of Theorem \ref{thm:existence of limit distribution}]
Fix $x\in\mathbb{S}_{d}^{+}$. By means of Proposition \ref{prop:upper bound for psi},
we see that
\begin{align*}
\lim_{t\to\infty}\int_{\mathbb{S}_{d}^{+}}\mathrm{e}^{-\langle u,\xi\rangle}p_{t}(x,\mathrm{d}\xi) & =\lim_{t\to\infty}\exp\left(-\int_{0}^{t}F(\psi(s,u))\mathrm{d}s-\langle x,\psi(t,u)\rangle\right)\\
 & =\exp\left(-\int_{0}^{\infty}F(\psi(s,u))\mathrm{d}s\right),
\end{align*}
and the limit on the right-hand side is finite according to Lemma
\ref{lem:bound for F(psi(s,u))}. Clearly, by \eqref{eq:06}, we also
have that $u\mapsto\int_{0}^{\infty}F(\psi(s,u)))\mathrm{d}s$ is
continuous at $u=0$. Now, L\'evy's continuity theorem, cf. \cite[Lemma 4.5]{MR2807963},
implies that $p_{t}(x,\cdot)\rightarrow\pi$ weakly as $t\to\infty$.
Moreover, $\pi$ has Laplace transform \eqref{eq:laplace transform of pi}.
It remains to verify that $\pi$ is the unique invariant distribution.

\textit{Invariance.} Fix $u\in\mathbb{S}_{d}^{+}$ and let $t\geq0$
be arbitrary. Then
\begin{align*}
\int_{\mathbb{S}_{d}^{+}}\mathrm{e}^{-\langle u,\xi\rangle}\left(\int_{\mathbb{S}_{d}^{+}}p_{t}(x,\mathrm{d}\xi)\pi(\mathrm{d}x)\right) & =\int_{\mathbb{S}_{d}^{+}}\left(\int_{\mathbb{S}_{d}^{+}}\mathrm{e}^{-\langle u,\xi\rangle}p_{t}(x,\mathrm{d}\xi)\right)\pi(\mathrm{d}x)\\
 & =\mathrm{e}^{-\phi(t,u)}\int_{\mathbb{S}_{d}^{+}}\exp\left(-\langle x,\psi(t,u)\rangle\right)\pi(\mathrm{d}x).
\end{align*}
Noting that $\psi$ satisfies the semi-flow equation\footnote{I.e., it holds that $\psi(t+s,u)=\psi\left(s,\psi(t,u)\right)$ for
all $t,\thinspace s\geq0$.} due to \cite[Lemma 3.2]{MR2807963} and using that the Laplace transform
of $\pi$ is given by \eqref{eq:laplace transform of pi}, for each
$u\in\mathbb{S}_{d}^{+}$, we obtain
\begin{align*}
\int_{\mathbb{S}_{d}^{+}}\mathrm{e}^{-\langle u,\xi\rangle}\left(\int_{\mathbb{S}_{d}^{+}}p_{t}(x,\mathrm{d}\xi)\pi(\mathrm{d}x)\right) & =\mathrm{e}^{-\phi(t,u)}\exp\left(-\int_{0}^{\infty}F\left(\psi\left(s,\psi(t,u)\right)\right)\mathrm{d}s\right)\\
 & =\mathrm{e}^{-\phi(t,u)}\exp\left(-\int_{0}^{\infty}F\left(\psi(t+s,u)\right)\mathrm{d}s\right)\\
 & =\mathrm{e}^{-\phi(t,u)}\exp\left(-\int_{t}^{\infty}F\left(\psi(s,u)\right)\mathrm{ds}\right)\\
 & =\exp\left(-\int_{0}^{\infty}F\left(\psi(s,u)\right)\mathrm{d}s\right)\\
 & =\int_{\mathbb{S}_{d}^{+}}\mathrm{e}^{-\langle x,u\rangle}\pi\left(\mathrm{d}x\right).
\end{align*}
Consequently, $\pi$ is invariant.

\textit{Uniqueness.} Let $\pi'$ be another invariant distribution.
For fixed $u\in\mathbb{S}_{d}^{+}$ and $t\geq0$ we have
\begin{align*}
\int_{\mathbb{S}_{d}^{+}}\mathrm{e}^{-\langle x,u\rangle}\pi'(\mathrm{d}x) & =\int_{\mathbb{S}_{d}^{+}}\mathrm{e}^{-\langle u,\xi\rangle}\left(\int_{\mathbb{S}_{d}^{+}}p_{t}(x,\mathrm{d}\xi)\pi'(\mathrm{d}x)\right)\\
 & =\int_{\mathbb{S}_{d}^{+}}\exp\left(-\phi(t,u)-\langle x,\psi(t,u)\rangle\right)\pi'(\mathrm{d}x).
\end{align*}
Letting $t\to\infty$ shows that $\pi'$ also satisfies \eqref{eq:laplace transform of pi}.
By uniqueness of the Laplace transforms, it holds that $\pi'=\pi$.
\end{proof}

\begin{proof}[Proof of Proposition \ref{prop:necessity for ergodicity}]
Let $x\in\mathbb{S}_{d}^{+}$ and $\pi\in\mathcal{P}(\mathbb{S}_{d}^{+})$
be such that $p_{t}(x,\cdot)\to\pi$ weakly as $t\to\infty$. It follows
that
\[
\lim_{t\to\infty}\int_{\mathbb{S}_{d}^{+}}\mathrm{e}^{-\langle u,\xi\rangle}p_{t}(x,\mathrm{d}\xi)=\int_{\mathbb{S}_{d}^{+}}\mathrm{e}^{-\langle u,y\rangle}\pi(\mathrm{d}y),\quad u\in\mathbb{S}_{d}^{+},
\]
and we obtain from \eqref{eq:affine representation}
\[
\lim_{t\to\infty}\exp\left(-\int_{0}^{t}F(\psi(s,u))\mathrm{d}s\right)=\lim_{t\to\infty}\mathrm{e}^{\langle x,\psi(t,u)\rangle}\int_{\mathbb{S}_{d}^{+}}\mathrm{e}^{-\langle u,\xi\rangle}p_{t}(x,\mathrm{d}\xi)=\int_{\mathbb{S}_{d}^{+}}\mathrm{e}^{-\langle u,y\rangle}\pi(\mathrm{d}y).
\]
In particular, this implies
\begin{align*}
\int_{0}^{\infty}F(\psi(s,u))\mathrm{d}s<\infty,\quad u\in\mathbb{S}_{d}^{+}.
\end{align*}
Fix $u\in\mathbb{S}_{d}^{++}$. Assume that $\alpha=0$ and \eqref{eq:00}
holds. By definition of $F$ we have $F(u)\geq\int_{\mathbb{S}_{d}^{+}}(1-\exp(-\langle u,\xi\rangle))m(\mathrm{d}\xi)$
and thereby
\begin{align*}
F(\psi(s,u))\geq\int_{\lbrace\langle\xi,u\rangle>1\rbrace}\left(1-\mathrm{e}^{-\mathrm{e}^{-Ks}\langle\xi,u\rangle}\right)m(\mathrm{d}\xi),
\end{align*}
where we used \eqref{eq: lower bound psi}. Integrating over $[0,\infty)$
and using a change of variable $r:=\exp(-Ks)\langle\xi,u\rangle$
with $\mathrm{d}s=-1/K\cdot\mathrm{d}r/r$ yields
\begin{align*}
\int_{0}^{\infty}F(\psi(s,u))\mathrm{d}s & \geq\frac{1}{K}\int_{\lbrace\langle\xi,u\rangle>1\rbrace}\int_{0}^{\langle\xi,u\rangle}\frac{1-\mathrm{e}^{-r}}{r}\mathrm{d}rm(\mathrm{d}\xi)\\
 & \geq\frac{1}{K}\int_{\lbrace\langle\xi,u\rangle>1\rbrace}\int_{1}^{\langle\xi,u\rangle}\frac{1-\mathrm{e}^{-r}}{r}\mathrm{d}rm(\mathrm{d}\xi)\\
 & \geq\frac{1-\mathrm{e}^{-1}}{K}\int_{\lbrace\langle\xi,u\rangle>1\rbrace}\log(\langle\xi,u\rangle)m(\mathrm{d}\xi),
\end{align*}
where we used in the last inequality that $1-\exp(-r)\geq1-\exp(-1)>0$
for $r\ge1$. This leads to the estimate
\[
\int_{\lbrace\langle\xi,u\rangle>1\rbrace}\log(\langle\xi,u\rangle)m(\mathrm{d}\xi)\leq\frac{K}{1-\mathrm{e}^{-1}}\int_{0}^{\infty}F(\psi(s,u))\mathrm{d}s<\infty.
\]
Letting $u=\mathbbm{1}\in\mathbb{S}_{d}^{++}$ gives $\langle\xi,\mathbbm{1}\rangle=\mathrm{tr}(\xi)\geq\Vert\xi\Vert$
so that
\[
\int_{\lbrace\Vert\xi\Vert>1\rbrace}\log\left(\Vert\xi\Vert\right)m(\mathrm{d}\xi)\leq\int_{\lbrace\langle\xi,\mathbbm{1}\rangle>1\rbrace}\log\left(\langle\xi,\mathbbm{1}\rangle\right)m(\mathrm{d}\xi)<\infty.
\]
This completes the proof. \end{proof}

\begin{proof}[Proof of Corollary \ref{cor:first moment of pi}]
Using that $\Vert\exp(t\widetilde{B})\Vert\leq M\exp(-\delta t)$,
where $\delta$ is given by \eqref{subcritical}, we have
\begin{align*}
\lim_{t\to\infty}\int_{\mathbb{S}_{d}^{+}}yp_{t}(x,\mathrm{d}y) & =\int_{0}^{\infty}\mathrm{e}^{s\widetilde{B}}\left(b+\int_{\mathbb{S}_{d}^{+}}\xi m(\mathrm{d}\xi)\right)\mathrm{d}s\in\mathbb{S}_{d}^{+}.
\end{align*}
It remains to verify that $\lim_{t\to\infty}\int_{\mathbb{S}_{d}^{+}}yp_{t}(x,\mathrm{d}y)=\int_{\mathbb{S}_{d}^{+}}y\pi(\mathrm{d}y)$.
To do so, we can proceed similar to the proof of Theorem \ref{thm: first moment}.
Indeed, by Lemma \ref{lem:estimate0}, we estimate
\[
\sup_{t\geq0}\int_{\mathbb{S}_{d}^{+}}\Vert y\Vert p_{t}(x,\mathrm{d}y)\leq\sup_{t\geq0}\mathrm{tr}\left(\int_{\mathbb{S}_{d}^{+}}yp_{t}(x,\mathrm{d}y)\right)\leq\sqrt{d}\sup_{t\geq0}\left\Vert \int_{\mathbb{S}_{d}^{+}}yp_{t}(x,\mathrm{d}y)\right\Vert <\infty.
\]
Therefore, applying the Lemma of Fatou yields
\[
\int_{\mathbb{S}_{d}^{+}}\Vert y\Vert\pi(\mathrm{d}y)\leq\sup_{t\geq0}\int_{\mathbb{S}_{d}^{+}}\Vert y\Vert p_{t}(x,\mathrm{d}y)<\infty.
\]
So $\pi\in\mathcal{P}_{1}(\mathbb{S}_{d}^{+})$. Now, let $\varepsilon>0$.
By dominated convergence theorem, we see that
\[
\lim_{\varepsilon\searrow0}\int_{\mathbb{S}_{d}^{+}}\frac{1-\mathrm{e}^{-\langle\varepsilon u,y\rangle}}{\varepsilon}\pi(\mathrm{d}y)=\int_{\mathbb{S}_{d}^{+}}\langle u,y\rangle\pi(\mathrm{d}y).
\]
Moreover, Noting that, by Proposition \ref{prop:upper bound for psi},
\[
1-\mathrm{e}^{-\langle\psi(s,\varepsilon u),\xi\rangle}\leq\langle\psi(s,\varepsilon u),\xi\rangle\leq\Vert\xi\Vert\Vert\varepsilon u\Vert\mathrm{e}^{-\delta t},
\]
we can use once again the dominated convergence theorem to obtain
\begin{align*}
\lim_{\varepsilon\searrow0}\int_{\mathbb{S}_{d}^{+}}\frac{1-\mathrm{e}^{-\langle\varepsilon u,y\rangle}}{\varepsilon}\pi(\mathrm{d}y) & =\lim_{\varepsilon\searrow0}\frac{1}{\varepsilon}\int_{0}^{\infty}F(\psi(s,\varepsilon u))\mathrm{d}s\\
 & =\lim_{\varepsilon\searrow0}\int_{0}^{\infty}\left(\langle b,\frac{\psi(s,\varepsilon u)}{\varepsilon}\rangle\mathrm{d}s+\int_{\mathbb{S}_{d}^{+}\backslash\lbrace0\rbrace}\frac{1-\mathrm{e}^{-\langle\psi(s,\varepsilon u),\xi\rangle}}{\varepsilon}m(\mathrm{d}\xi)\right)\mathrm{d}s\\
 & =\int_{0}^{\infty}\langle b,D\psi(s,0)(u)\rangle\mathrm{d}s+\int_{0}^{\infty}\int_{\mathbb{S}_{d}^{+}\backslash\lbrace0\rbrace}\langle D\psi(s,0)(u),\xi\rangle m(\mathrm{d}\xi)\mathrm{d}s\\
 & =\int_{0}^{\infty}\langle\mathrm{e}^{s\widetilde{B}}\left(b+\int_{\mathbb{S}_{d}^{+}\backslash\lbrace0\rbrace}\xi m(\mathrm{d}\xi)\right),u\rangle\mathrm{d}s,
\end{align*}
where we used that $D\psi(s,0)(u)=\exp(s\widetilde{B}^{\top})u$ (see
the proof of Theorem \ref{thm: first moment}). Since the latter identity
holds for all $u\in\mathbb{S}_{d}^{+}$, we conclude with our proof.
\end{proof}

\section{Proof of Theorem \ref{thm:convergence dL}}

\label{sec:proof of convergence in dL}

\begin{proof}[Proof of Theorem \ref{thm:convergence dL}]
Suppose that \eqref{LOGMOMENT} holds. By definition of $d_{L}$, we have
\begin{align}
 & d_{L}\left(p_{t}(x,\mathrm{d}\xi),\pi(\mathrm{d}\xi)\right)\nonumber \\
 & =\sup_{u\in\mathbb{S}_{d}^{+}\setminus\{0\}}\frac{1}{\Vert u\Vert}\left\vert \int_{\mathbb{S}_{d}^{+}}\mathrm{e}^{-\langle u,\xi\rangle}p_{t}(x,\mathrm{d}\xi)-\int_{\mathbb{S}_{d}^{+}}\mathrm{e}^{-\langle u,\xi\rangle}\pi(\mathrm{d}\xi)\right\vert \nonumber \\
 & =\sup_{u\in\mathbb{S}_{d}^{+}\setminus\{0\}}\frac{1}{\Vert u\Vert}\left\vert \exp\left(-\int_{0}^{t}F\left(\psi(s,u)\right)\mathrm{d}s-\langle x,\psi(t,u)\rangle\right)-\exp\left(-\int_{0}^{\infty}F\left(\psi(s,u)\right)\mathrm{d}s\right)\right\vert .\label{eq:01}
\end{align}
Let $C>0$ be a generic constant that may vary from line to line.
Using then \eqref{eq:bound for F}, we have, for each $t\geq0$,
\begin{align*}
 & \left\vert \exp\left(-\int_{0}^{t}F\left(\psi(s,u)\right)\mathrm{d}s-\langle x,\psi(t,u)\rangle\right)-\exp\left(-\int_{0}^{\infty}F\left(\psi(s,u)\right)\mathrm{d}s\right)\right\vert \\
 & \qquad\qquad\qquad\leq\left\vert \exp\left(-\langle x\psi(t,u)\rangle\right)-1\right\vert \cdot\left|\exp\left(-\int_{0}^{t}F\left(\psi(s,u)\right)\mathrm{d}s\right)\right\vert \\
 & \qquad\qquad\qquad\quad+\left\vert \exp\left(-\int_{0}^{t}F\left(\psi(s,u)\right)\mathrm{d}s\right)-\exp\left(-\int_{0}^{\infty}F\left(\psi(s,u)\right)\mathrm{d}s\right)\right\vert \\
 & \qquad\qquad\qquad\leq\left\vert \langle x,\psi(s,u)\rangle\right|+\left|\int_{t}^{\infty}F\left(\psi(s,u)\right)\mathrm{d}s\right\vert \\
 & \qquad\qquad\qquad\leq M\Vert x\Vert\Vert u\Vert\mathrm{e}^{-t\delta}+C\Vert u\Vert\int_{t}^{\infty}\mathrm{e}^{-s\delta}\mathrm{d}s\\
 & \qquad\qquad\qquad\leq C\left(1+\Vert x\Vert\right)\Vert u\Vert\mathrm{e}^{-t\delta},
\end{align*}
which when plugged back into \eqref{eq:01} implies \eqref{eq:convergence speed of pi}.
\end{proof}

\section{Proof of Theorem \ref{th:wasserstein ergodicity}}

\label{sec:proof of exp ergodicity in wasserstein distance}

\begin{proof}[Proof of Theorem \ref{th:wasserstein ergodicity}]
Note that $\pi\in\mathcal{P}_{1}(\mathbb{S}_{d}^{+})$ by Corollary
\ref{cor:first moment of pi}. Let $q_{t}(x,\mathrm{d}\xi)$ be transition
kernel for the conservative, subcritical affine processes with admissible
parameters $(\alpha=0,b=0,B,m=0,\mu)$. Using the particular form
of the Laplace transform for $p_{t}(x,\cdot)$ (see \eqref{eq:affine representation})
it is not difficult to see that $p_{t}(x,\cdot)=q_{t}(x,\cdot)\ast p_{t}(0,\cdot)$,
where `$\ast$' denotes the convolution of measures. Let $H$ be any
coupling with marginals $\delta_{x}$ and $\pi$, i.e., $H\in\mathcal{H}(\delta_{x},\pi)$.
Using the invariance of $\pi$, together with the convexity of $W_{1}$
(see \cite[Theorem 4.8]{MR2459454}) and \cite[Lemma 2.3]{2019arXiv190202833F},
we find
\begin{align*}
W_{1}\left(p_{t}(x,\cdot),\pi\right) & =W_{1}\left(\int_{\mathbb{S}_{d}^{+}}p_{t}(y,\cdot)\delta_{x}(\mathrm{d}y),\int_{\mathbb{S}_{d}^{+}}p_{t}(y',\cdot)\pi(\mathrm{d}y')\right)\\
 & \leq\int_{\mathbb{S}_{d}^{+}\times\mathbb{S}_{d}^{+}}W_{1}\left(p_{t}(y,\cdot),p_{t}(y',\cdot)\right)H(\mathrm{d}y,\mathrm{d}y')\\
 & \leq\int_{\mathbb{S}_{d}^{+}\times\mathbb{S}_{d}^{+}}W_{1}\left(q_{t}(y,\cdot),q_{t}(y',\cdot)\right)H(\mathrm{d}y,\mathrm{d}y').
\end{align*}
The integrand can now be estimated as follows
\begin{align*}
W_{1}\left(q_{t}(y,\cdot),q_{t}(y',\cdot)\right) & \leq\int_{\mathbb{S}_{d}^{+}\times\mathbb{S}_{d}^{+}}\|z-z'\|G(\mathrm{d}z,\mathrm{d}z')\\
 & \leq\int_{\mathbb{S}_{d}^{+}}\|z\|q_{t}(y,\mathrm{d}z)+\int_{\mathbb{S}_{d}^{+}}\|z'\|q_{t}(y',\mathrm{d}z')\\
 & \leq M\sqrt{d}\mathrm{e}^{-t\delta}\left(\|y\|+\|y'\|\right),
\end{align*}
where $G$ is any coupling of $(q_{t}(y,\cdot),q_{t}(y',\cdot))$
and we have used Lemma \ref{lem:estimate0} to obtain
\begin{align*}
\int_{\mathbb{S}_{d}^{+}}\Vert z\Vert q_{t}(y,\mathrm{d}z) & \leq\mathrm{tr}\left(\int_{\mathbb{S}_{d}^{+}}zq_{t}(y,\mathrm{d}z)\right)\\
 & =\mathrm{tr}\left(\mathrm{e}^{t\widetilde{B}}y\right)\\
 & \leq\sqrt{d}\left\Vert \mathrm{e}^{t\widetilde{B}}y\right\Vert \\
 & \leq M\sqrt{d}\mathrm{e}^{-t\delta}\Vert y\Vert.
\end{align*}
Combining these estimates, we obtain
\begin{align*}
W_{1}(p_{t}(x,\cdot),\pi) & \leq M\sqrt{d}\mathrm{e}^{-t\delta}\int_{\mathbb{S}_{d}^{+}\times\mathbb{S}_{d}^{+}}\left(\|y\|+\|y'\|\right)H(\mathrm{d}y,\mathrm{d}y')\\
 & \leq M\sqrt{d}\mathrm{e}^{-t\delta}\left(\|x\|+\int_{\mathbb{S}_{d}^{+}}\Vert y\Vert\pi(\mathrm{d}y)\right),
\end{align*}
which yields \eqref{eq:05}. \end{proof}

\section{Applications }

\label{sec:applications}

Let $(W_{t})_{t\geq0}$ be a $d\times d$-matrix of independent standard
Brownian motions. Denote by $(J_{t})_{t\geq0}$ an $\mathbb{S}_{d}^{+}$-valued
L\'evy subordinator with L\'evy measure $m$. Suppose that these two processes
are independent of each other. Following \cite{MR2819242}, the stochastic
differential equation
\begin{equation}
\begin{cases}
\mathrm{d}X_{t}=\left(b+\beta X_{t}+X_{t}\beta^{\top})\right)\mathrm{d}t+\sqrt{X_{t}}\mathrm{d}W_{t}\Sigma+\Sigma^{\top}\mathrm{d}W_{t}^{\top}\sqrt{X_{t}}+\mathrm{d}J_{t} & t\geq0,\\
X_{0}=x\in\mathbb{S}_{d}^{+},
\end{cases}\label{eq:sde affine jump diffusion}
\end{equation}
has a unique weak solution if $b\succeq(d-1)\Sigma^{\top}\Sigma$
and $\Sigma,\thinspace\beta$ are real-valued $d\times d$-matrices.
Moreover, according to \cite[Corollary 3.2]{MR2819242}, if $b\succ(d+1)\Sigma^{\top}\Sigma$,
then a unique strong solution also exists. The corresponding Markov
process $X=(X_{t})_{t\geq0}$ is a conservative affine process with
admissible parameters $(\alpha,b,B,m,0)$ with diffusion $\alpha=\Sigma^{\top}\Sigma$
and linear drift $B(x)=\beta x+x\beta^{\top}$. The functions $F$
and $R$ are given by
\[
F(u)=\langle b,u\rangle+\int_{\mathbb{S}_{d}^{+}\backslash\lbrace0\rbrace}\left(1-\mathrm{e}^{-\langle u,\xi\rangle}\right)m(\mathrm{d}\xi)
\]
and
\[
R(u)=-2u\alpha u+u\beta+\beta^{\top}u.
\]
The generalized Riccati equations are now given by
\begin{align*}
\partial_{t}\phi(t,u) & =\langle b,\psi(t,u)\rangle+\int_{\mathbb{S}_{d}^{+}\backslash\{0\}}\left(1-\mathrm{e}^{-\langle\psi(t,u),\xi\rangle}\right)m(\mathrm{d}\xi),\\
\partial_{t}\psi(t,u) & =-2\psi(t,u)\alpha\psi(t,u)+\psi(t,u)\beta+\beta^{\top}\psi(t,u),
\end{align*}
with initial conditions $\phi(0,u)=0$ and $\psi(0,u)=u$. Let $\sigma_{t}^{\beta}:\mathbb{S}_{d}^{+}\to\mathbb{S}_{d}^{+}$
be given by
\[
\sigma_{t}^{\beta}(x):=2\int_{0}^{t}\mathrm{e}^{\beta s}x\mathrm{e}^{\beta^{\top}s}\mathrm{d}s,\quad t\geq0.
\]
According to \cite[Section 4.3]{MR2956112}, we have
\begin{align*}
\phi(t,u) & =\langle b,\int_{0}^{t}\psi(s,u)\mathrm{d}s \rangle +\int_{0}^{t}\int_{\mathbb{S}_{d}^{+}\backslash\{0\}}\left(1-\mathrm{e}^{-\langle\psi(s,u),\xi\rangle}\right)m(\mathrm{d}\xi)\mathrm{d}s,\\
\psi(t,u) & =\mathrm{e}^{\beta^{\top}t}\left(u^{-1}+\sigma_{t}^{\beta}(\alpha)\right)^{-1}\mathrm{e}^{\beta t}.
\end{align*}
Since $\widetilde{B}(x)=B(x)$, Remark \ref{rem:equivalence for delta condition}
implies that $X$ is subcritical, provided $\beta$ has only eigenvalues
with negative real parts. If the L\'evy measure $m$ satisfies \eqref{LOGMOMENT},
then Theorem \ref{thm:existence of limit distribution} implies existence,
uniqueness, and convergence to the invariant distribution $\pi$ whose
Laplace transform satisfies
\[
\int_{0}^{\infty}\mathrm{e}^{-\langle u,x\rangle}\pi\left(\mathrm{d}x\right)  =\langle b,\int_{0}^{\infty}\psi(s,u)\mathrm{d}s\rangle \exp\left(-\int_{0}^{\infty}\int_{\mathbb{S}_{d}^{+}\backslash\{0\}}\left(1-\mathrm{e}^{-\langle\psi(s,u),\xi\rangle}\right)m(\mathrm{d}\xi)\mathrm{d}s\right).
\]
Moreover, if in addition $\int_{\lbrace\Vert\xi\Vert\geq1\rbrace}\Vert\xi\Vert m(\mathrm{d}\xi)<\infty$,
then we infer from Corollary \ref{cor:first moment of pi} that
\[
\lim_{t\to\infty}\mathbb{E}_{x}\left[X_{t}\right]=\int_{0}^{\infty}\mathrm{e}^{s\beta^{\top}}\left(b+\int_{\mathbb{S}_{d}^{+}\backslash\lbrace0\rbrace}\xi m(\mathrm{d}\xi)\right)\mathrm{e}^{s\beta}\mathrm{d}s=\int_{\mathbb{S}_{d}^{+}}y\pi(\mathrm{d}y).
\]

We end this section by considering the following examples.

\begin{exmp}[The matrix-variate basic affine jump-diffusion and
Wishart process] Take $b=2k\Sigma^{\top}\Sigma$ with
$k\geq d-1$ in \eqref{eq:sde affine jump diffusion}. This process
is called matrix-variate basic affine jump-diffusion on $\mathbb{S}_{d}^{+}$
(MBAJD for short), see \cite[Section 4]{MR2956112}. Following
\cite[Section 4.3]{MR2956112}, $\phi(t,u)$ is precisely given by
\[
\phi(t,u)=k\log\det\left(\mathbbm{1}+u\sigma_{t}^{\beta}(\alpha)\right)\int_{0}^{t}\int_{\mathbb{S}_{d}^{+}\backslash\{0\}}\left(1-\mathrm{e}^{-\langle\psi(s,u),\xi\rangle}\right)m(\mathrm{d}\xi)\mathrm{d}s,
\]
and Theorem \ref{thm:existence of limit distribution} implies that
the unique invariant distribution is given by
\begin{align*}
\int_{0}^{\infty}\mathrm{e}^{-\langle u,x\rangle}\pi\left(\mathrm{d}x\right) & =\left(\det\left(\mathbbm{1}+\sigma_{\infty}^{\beta}(\alpha)u\right)\right)^{-k}\exp\left(-\int_{0}^{\infty}\int_{\mathbb{S}_{d}^{+}\backslash\{0\}}\left(1-\mathrm{e}^{-\langle\psi(s,u),\xi\rangle}\right)m(\mathrm{d}\xi)\mathrm{d}s\right),
\end{align*}
where $\sigma_{\infty}^{\beta}(\alpha)=\int_{0}^{\infty}\exp(s\beta)\alpha\exp(s\beta^{\top})\mathrm{d}s$.

The well-known Wishart process, introduced by Bru \cite{MR1132135},
is a special case of the MBAJD with $m=0$. Existence of a unique
distribution was then obtained in \cite[Lemma C.1]{MR3549707}. In
this case $\pi$ is a Wishart distribution with shape parameter $k$
and scale parameter $\sigma_{\infty}^{\beta}(\alpha)$. \end{exmp}

\begin{exmp}[Matrix-variate Ornstein-Uhlenbeck type processes]
For $b=0$ and $\Sigma=0$, we call the solutions to the stochastic
differential equation \eqref{eq:sde affine jump diffusion} matrix-variate
Ornstein-Uhlenbeck (shorted OU) type processes, see \cite{MR2353270}.
Properties of the stationary matrix-variate OU type processes were
investigated in \cite{MR2555198}. Provided $\int_{\lbrace\Vert\xi\Vert\geq1\rbrace}\Vert\xi\Vert m(\mathrm{d}\xi)<\infty$,
Theorem \ref{th:wasserstein ergodicity} implies that the matrix-variate
OU type process is also exponentially ergodic in the Wasserstein-1-distance.
\end{exmp}

\appendix

\section{Matrix calculus}

\label{sec:matrix calculus}

For a $d\times d$ square matrix $x$, recall that $\mathrm{tr}(x)=\sum_{i=1}^{d}x_{ii}$. The Frobenius norm of $x$ is
given by $\Vert x\Vert=\mathrm{tr}(xx)^{1/2}=(\sum_{i,j=1}^{d}\vert x_{ij}\vert^{2})^{1/2}$.
Let us collect one property of this norm.

\begin{lem}\label{lem:estimate0} Let $x\in\mathbb{S}_{d}^{+}$,
then
\[
\Vert x\Vert\leq\mathrm{tr}(x)\leq\sqrt{d}\Vert x\Vert.
\]
\end{lem}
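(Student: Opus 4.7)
The plan is to exploit the spectral theorem, which for $x\in\mathbb{S}_d^+$ gives an orthogonal diagonalization $x=\sum_{i=1}^d\lambda_i v_iv_i^\top$ with nonnegative eigenvalues $\lambda_1,\ldots,\lambda_d\ge 0$. The point is that both quantities $\mathrm{tr}(x)$ and $\|x\|$ admit clean expressions in terms of these eigenvalues, so the matrix inequality reduces to a scalar inequality among $d$ nonnegative numbers.

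Concretely, I would first note that $\mathrm{tr}(x)=\sum_{i=1}^d\lambda_i$ and, since $x^2=\sum_{i=1}^d\lambda_i^2 v_iv_i^\top$, also $\|x\|^2=\mathrm{tr}(x^2)=\sum_{i=1}^d\lambda_i^2$. For the left-hand inequality $\|x\|\le\mathrm{tr}(x)$, I square both sides and observe that
\[
\bigl(\mathrm{tr}(x)\bigr)^2=\Bigl(\sum_{i=1}^d\lambda_i\Bigr)^2=\sum_{i=1}^d\lambda_i^2+2\sum_{1\le i<j\le d}\lambda_i\lambda_j\ge\sum_{i=1}^d\lambda_i^2=\|x\|^2,
\]
using nonnegativity of the cross terms, which is where the hypothesis $x\in\mathbb{S}_d^+$ enters essentially.

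For the right-hand inequality $\mathrm{tr}(x)\le\sqrt{d}\|x\|$, I apply the Cauchy-Schwarz inequality to the vectors $(\lambda_1,\ldots,\lambda_d)$ and $(1,\ldots,1)$ in $\mathbb{R}^d$, obtaining
\[
\mathrm{tr}(x)=\sum_{i=1}^d\lambda_i\cdot 1\le\Bigl(\sum_{i=1}^d\lambda_i^2\Bigr)^{1/2}\Bigl(\sum_{i=1}^d 1\Bigr)^{1/2}=\sqrt{d}\,\|x\|.
\]
Since both steps are essentially one-line computations once the spectral decomposition is invoked, there is no real obstacle; the only conceptual ingredient is the reduction to eigenvalues via the spectral theorem, after which everything is elementary real analysis.
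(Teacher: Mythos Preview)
Your proof is correct and follows essentially the same approach as the paper: both arguments diagonalize $x$ via the spectral theorem to reduce the matrix inequalities to the scalar inequalities $\bigl(\sum_i\lambda_i^2\bigr)^{1/2}\le\sum_i\lambda_i\le\sqrt{d}\bigl(\sum_i\lambda_i^2\bigr)^{1/2}$ for nonnegative $\lambda_i$. The paper states these in a single line without naming Cauchy--Schwarz explicitly, but the content is identical.
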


\begin{proof} Write $x=u^{\top}\kappa u$, where $u$ is orthogonal and $\kappa$ is diagonal with its entries being given by  $\lambda_{i}(x)$,
$i,\ldots,d$,  the eigenvalues of $x$. We have
\[
\Vert x\Vert^{2}=\mathrm{tr}\left(u^{\top}\kappa^{2}u\right)=\sum_{i=1}^{d}\lambda_{i}(x)^{2}.
\]
Since $x\in\mathbb{S}_{d}^{+}$, it holds that $\lambda_{i}(x)\geq0$,
$i=1,\ldots,d$. Then
\[
\left\Vert x\right\Vert =\left(\sum_{i=1}^{d}\lambda_{i}^{2}(x)\right)^{1/2}\leq\sum_{i=1}^{d}\lambda_{i}(x)\leq\sqrt{d}\left(\sum_{i=1}^{d}\lambda_{i}^{2}(x)\right)^{1/2}=\sqrt{d}\Vert x\Vert.
\]
\end{proof}

\bibliographystyle{amsplain}

\def\cprime{$'$} \def\cprime{$'$}
\providecommand{\bysame}{\leavevmode\hbox to3em{\hrulefill}\thinspace}
\providecommand{\MR}{\relax\ifhmode\unskip\space\fi MR }
\providecommand{\MRhref}[2]{%
  \href{http://www.ams.org/mathscinet-getitem?mr=#1}{#2}
}
\providecommand{\href}[2]{#2}

\end{document}